\tikzset{commutative diagrams/column sep/my/.initial=0.01ex}
\newtheorem{theorem}{Theorem}[section]
\newtheorem{corollary}[theorem]{Corollary}
\theoremstyle{definition}
\newtheorem{definition}[theorem]{Definition}
\theoremstyle{remark}
\newtheorem{remark}[theorem]{\bf Remark}
\numberwithin{equation}{section}
\newcommand{\N}{\mathbb N}
\DeclareMathOperator{\Z}{\mathbb{Z}}
\DeclareMathOperator{\Q}{\mathbb{Q}}
\DeclareMathOperator{\F}{\mathbb{F}}
\DeclareMathOperator{\spec}{spec}
\tikzset{commutative diagrams/row sep/my/.initial=0.1ex}
\begin{document}
%\tikzset{commutative diagrams/background color=grey}
%\title[Exotic lattices in $\mathbb{G}_m^{\;2}$]{Exotic lattices in $\mathbb{G}_m^{\;2}$}
\title[Families of wild 1-motives]{Families of wild 1-motives}

    %Information for first author
%\author[Grzegorz Banaszak]{Grzegorz Banaszak}
%\address{Department of Mathematics and Computer Science, Adam Mickiewicz University,
%Pozna\'{n} 61614, Poland}
%\email{banaszak@amu.edu.pl}
%\urladdr{http://www.staff.amu.edu.pl/~banaszak/}
%
%%    Information for second author
\author[Grzegorz Banaszak]{Grzegorz Banaszak}
\address{Department of Mathematics and Computer Science, Adam Mickiewicz University, Uniwersytetu Poznańskiego 4, 
Pozna\'{n} 61614, Poland}
\email{banaszak@amu.edu.pl}
\urladdr{https://banaszak.faculty.wmi.amu.edu.pl/}

\author[Dorota Blinkiewicz]{Dorota Blinkiewicz}
\address{Department of Mathematics and Computer Science, Adam Mickiewicz University, Uniwersytetu Poznańskiego 4, 
Pozna\'{n} 61614, Poland}
\email{dorota.blinkiewicz@amu.edu.pl}
\urladdr{https://db.faculty.wmi.amu.edu.pl/}

%    General info
%\subjclass[2000]
%\date{}
\keywords{discrete logarithm problem, 1-motives, torsion 1-motives}
\subjclass[2020]{11Rxx, 14L15}

%\thanks{}

\begin{abstract}
In this paper we present families of wild 1-motives, i.e., families of pairwise non-isomorphic Deligne 1-motives, over rings of $S$-integers $\mathcal{O}_{F,S}$, which have the same reductions to torsion 1-motives for all $v\notin S$. Our proof is based on a technical result concerning a local to global principle for multiple base discrete logarithm problem for arbitrary big bases.
%A multiple base $(g_1,\ldots,g_n)$ discrete logarithm problem in an abelian group $G$ concerns solutions of $g_1^{x_1}\cdot\ldots\cdot g_n^{x_n}=g$ in $(x_1,\ldots,x_n)\in\Z^n$, where $g_1,\ldots,g_n,g\in G$. In this paper, we show that the local to global multiple base discrete logarithm problem in the group $\mathbb{G}_m^{\;2}(\mathcal{O}_{F,S})=(\mathcal{O}_{F,S}^\times)^{2}$, for a number field $F$, has counterexamples for arbitrary big bases, including infinite bases.
\end{abstract}
\maketitle
%\textbf{Resume}\\
%Un problème de logarithme discret à bases multiples $(g_1,\ldots,g_n)$ dans un groupe abélien $G$ concerne les solutions de $g_1^{x_1}\cdot\ldots\cdot g_n^{x_n}=g$ dans $(x_1 ,\ldots,x_n)\in\Z^n$, où $g_1,\ldots,g_n,g\in G$. Dans cet article, nous montrons que le problème du logarithme discret local à global dans le groupe $\mathbb{G}_m^{\;2}(\mathcal{O}_{F,S})=(\mathcal{ O}_{F,S}^\times)^{2}$, pour un champ numérique $F$, a des contre-exemples pour de grandes bases arbitraires, y compris des bases infinies.
%Sur la base de ce résultat, nous présentons des familles de 1-motifs de Deligne non isomorphes deux à deux, sur des anneaux $\mathcal{O}_{F,S}$, qui ont les mêmes réductions aux 1-motifs de torsion pour tout $v\notin S $.
\section{Introduction}
Let $F$ be a number field and $\mathcal{O}_F$ the ring of integers in $F.$ Let $S$ be a finite set of prime ideals of $\mathcal{O}_F$ and let $\mathcal{O}_{F,S}$ be the ring of $S$-integers. 

In this paper, in section \ref{sec:section 2}, we recall definitions of Deligne 1-motive and torsion 1-motive. Previously families of wild 1-motives were investigated for small ranks. 
In present paper, in section \ref{new constructions}, we consider families of wild 1-motives with arbitrary ranks. Main results of these constructions are presented in Theorems \ref{wild 1-motives for tori} and \ref{const2}. Proofs of these Theorems are based on the technical result -- Theorem \ref{thm}. In the proof of Theorem \ref{thm}, we also investigate the local to global multiple base discrete logarithm problem, which will be discussed in section \ref{appendix}.

Because of number of technical computations we will keep working in this paper with $F=\Q$ and $\mathcal{O}_F=\Z$ for the simplicity of presentation. Our computations can easily be extended for 1-motives over $\mathcal{O}_{F,S},$ for any number field $F,$ with $S$ such that $\mathcal{O}_{F,S}$ is a PID. 
\section{1-motives and torsion 1-motives}\label{sec:section 2}
Following \cite{J2} (cf. \cite{B-VRS}), let us briefly recall definitions of torsion 1-motive and 1-motive in the sense of P. Deligne.
\begin{definition}
Let $\mathcal{S}$ be a noetherian regular scheme and let $\mathcal{C}$ be a category of sheaves of commutative groups on the $fppf$ site over $\mathcal{S}$. 

\textbf{A torsion 1-motive over $\mathcal{S}$} is a diagram:
\[
\begin{tikzcd}
&&\mathcal{Y}\arrow{d}{u}&&\\
0\arrow{r}&\mathcal{X}\arrow{r}{i}&\mathcal{Z}\arrow{r}{\pi}&\mathcal{A}\arrow{r}&0,
\end{tikzcd}
\]
in the category $\mathcal{C}$, where:
\begin{itemize}
\item the sheaf $\mathcal{Y}$ fits into an exact sequence $0\rightarrow \mathcal{F}\rightarrow \mathcal{Y}\rightarrow \Lambda\rightarrow 0$, where $\mathcal{F}$ is a finite flat group scheme, and $\Lambda$ is a lattice.
\item the sheaf $\mathcal{X}$ fits into an exact sequence $0\rightarrow \mathcal{T}\rightarrow \mathcal{X} \rightarrow \mathcal{F}\rightarrow 0$, where $\mathcal{F}$ is a finite flat group scheme and $\mathcal{T}$ is a torus.
\item the sheaf $\mathcal{A}$ is an abelian scheme.
\item the sequence $0\rightarrow \mathcal{X} \rightarrow \mathcal{Z}\rightarrow \mathcal{A}\rightarrow 0$ is exact.
\end{itemize}
\end{definition}
\begin{remark}
We will use the following notation for a torsion 1-motive:
\[
[\mathcal{Y}\rightarrow \mathcal{Z}].
\]
\end{remark}
\begin{remark}
\textbf{A 1-motive in the sense of P.~Deligne} is defined as above, where $\mathcal{Y}=\Lambda$ and $\mathcal{X}=\mathcal{T}$, so $\mathcal{Z}=\mathcal{G}$ is a semiabelian scheme (cf.  \cite{Del}, \cite[Def. 4.1., p. 675]{Jan}).
\end{remark}
\begin{definition}
A family of Deligne 1-motives $\{[\Lambda_\alpha\rightarrow \mathcal{G}]\}_\alpha$ over $\spec\mathcal{O}_{F,S}$ will be called \textbf{a family of wild 1-motives} if 1-motives in this family are pairwise non-isomorphic but after base change to $\spec k_v$ for $v\notin S$, torsion 1-motives $\{[r_v(\Lambda_\alpha)\rightarrow \mathcal{G}_v]\}_\alpha$ become all equal, where $r_v$ is the reduction map for $v\notin S$.
\end{definition}

%-------------------------------------------------------------
\subsection{Former constructions of wild 1-motives}\label{subsec1.2}
Let us recall that in 1975, A. Schinzel \cite[pp. 419-420]{Sch} considered four linearly independent points:
\[
\gamma := \begin{bmatrix}
1\\
4
\end{bmatrix}, \,\, \gamma_1 := \begin{bmatrix}
2\\
1
\end{bmatrix}, \,\, \gamma_2 := \begin{bmatrix}
3\\
2
\end{bmatrix}, \,\, \gamma_3 := \begin{bmatrix}
1\\
3
\end{bmatrix}
\]
in $\mathbb{Z}[\frac{1}{2},\frac{1}{3}]^{\times}\times \mathbb{Z}[\frac{1}{2},\frac{1}{3}]^{\times}$. This leads to two lattices $\Gamma_0\subset \Gamma$ as follows: 
\[
\Gamma := \gamma^{\Z} \cdot \gamma_1^{\Z} \cdot \gamma_2^{\Z} \cdot \gamma_3^{\Z},
\]
\[
\Gamma_0 := \gamma_1^{\Z} \cdot \gamma_2^{\Z} \cdot \gamma_3^{\Z}.
\]
A. Schinzel proved that $r_p(\Gamma)=r_p(\Gamma_0)$ for $p\ne 2,3$. He also checked that the equality holds for $p=2,3$.  
Consider $\mathcal{T}=\mathbb{G}_m\times_{{\rm{spec}} \, \mathbb{Z}[\frac{1}{2},\frac{1}{3}]}\mathbb{G}_m$. We have two 1-motives $[\Gamma\rightarrow \mathcal{T}]$ and $[\Gamma_0\rightarrow \mathcal{T}]$ which give two equal torsion 1-motives $[r_p(\Gamma)\rightarrow T_p]=[r_p(\Gamma_0)\rightarrow T_p]$ for each $p>3$. The 1-motive $[\Gamma_0\rightarrow \mathcal{T}]$ was called by P. Jossen \cite{J2} \textbf{the Schinzel's 1-motive}.

In \cite[Remark 3.3, p. 151]{BB}, we generalized the Schinzel's construction and constructed the specific family of four wild 1-motives $[\Lambda\rightarrow \mathcal{T}]$, $[\Lambda'\rightarrow \mathcal{T}]$, $[\Lambda\cap\Lambda'\rightarrow \mathcal{T}]$ and $[\Lambda\cdot\Lambda'\rightarrow \mathcal{T}]$ over $\Z[\frac{1}{2},\frac{1}{3},\frac{1}{5}]$, where $T:= \mathbb{G}^2_{m}$ and $\mathcal{T}:=\mathbb{G}_{m} \times_{{\rm{spec}} \, \mathbb{Z}[\frac{1}{2},\frac{1}{3},\frac{1}{5}]} \mathbb{G}_{m}$. Our extension of Schinzel's example gives lattices $\Lambda,\,\Lambda'$ such that $\Lambda\not\subset \Lambda'$ and $\Lambda'\not\subset \Lambda$.

In the setting of elliptic curves, we constructed families of wild 1-motives in \cite[Remark 3.5, p. 152]{BB}.
Let $E_d=y^2=x^3-d^2x$, $d\in\mathbb{Z}$ be the CM elliptic curve over $\mathbb{Q}$. Take $d$ such that ${\rm rank}_{\mathbb{Z}[i]} E_d(\mathbb{Q}(i))\geq 3$ (cf. \cite[Table 2, p. 464]{RS}, \cite[pp. 330--331]{BK}). Let $Q_1,Q_2,Q_3\in E_d(\mathbb{Q}(i))$ be linearly independent over $\mathbb{Z}[i]$. Let $A=E_d^2$ over $F=\mathbb{Q}(i)$.
%\begin{remark}[{\cite[Remark 3.5, p. 152]{BB}}]
Let $S$ be the set of primes of bad reduction of $A$. Let $\mathcal{A}$ be the N{\'e}ron model of $A$ over $\mathcal{O}_{F,S}$.
Consider the following $1$-motives over ${\rm spec}\,\mathcal{O}_{F,S}$: $[\Lambda\rightarrow \mathcal{A}]$, $[\Lambda'\rightarrow \mathcal{A}]$, $[\Lambda\cap\Lambda'\rightarrow \mathcal{A}]$ and $[\Lambda+\Lambda'\rightarrow \mathcal{A}]$, in the sense of P. Deligne, where
\begin{align*}
\Lambda&:=\mathbb{Z}[i]\begin{bmatrix}0\\Q_1\end{bmatrix}+\mathbb{Z}[i]\begin{bmatrix}Q_1\\0\end{bmatrix}+\mathbb{Z}[i]\begin{bmatrix}Q_2\\Q_1\end{bmatrix}+\mathbb{Z}[i]\begin{bmatrix}Q_3\\Q_2\end{bmatrix}+\mathbb{Z}[i]\begin{bmatrix}0\\Q_3\end{bmatrix}, \\
\Lambda'&:=\mathbb{Z}[i]\begin{bmatrix}Q_1\\0\end{bmatrix}+\mathbb{Z}[i]\begin{bmatrix}Q_2\\Q_1\end{bmatrix}+\mathbb{Z}[i]\begin{bmatrix}Q_3\\Q_2\end{bmatrix}+\mathbb{Z}[i]\begin{bmatrix}0\\Q_3\end{bmatrix}+\mathbb{Z}[i]\begin{bmatrix}Q_3\\0\end{bmatrix}
\end{align*}
Changing the base to ${\rm spec}\,k_v$ and taking the images via the reduction maps $r_v$ (for $v\notin S$) of the subgroups $\Lambda,\;\Lambda',\;\Lambda\cap\Lambda',\;\Lambda+\Lambda'$ in $A_v(k_v)$, we obtain torsion 1-motives $[r_v(\Lambda)\rightarrow A_v]$, $[r_v(\Lambda')\rightarrow A_v]$, $[r_v(\Lambda\cap\Lambda')\rightarrow A_v]$ and $[r_v(\Lambda+\Lambda')\rightarrow A_v]$. For each $v\notin S$ these four torsion 1-motives are all equal. 
%\begin{proof}
%See \cite[Remark 3.5, p. 152]{BB}. 
%\end{proof}
%\label{prop4.8}\end{remark}
%\begin{remark}\label{rem4.8} 

Another construction of wild 1-motives in the setting of elliptic curves can be built based on \cite{JP}. Let $E$ be an elliptic curve defined over a number field $F$ without CM such that there exist points $P_1,P_2,P_3\in E(F)$ which are linearly independent over $\Z$. Let $A:=E^3$. We define a subgroup $\Lambda\subset A(F)$ and a point $P\in A(F)$ as in \cite{JP}. P. Jossen and A. Perucca have proven that for any $v$ of good reduction $r_v(P)\in r_v(\Lambda)$ but $P\notin \Lambda$. Moreover, let $\Lambda':=\Z P+\Lambda$. Then, we see that $\Lambda\cap \Lambda'=\Lambda$ and $\Lambda+\Lambda'=\Lambda'$. So, we can obtain two different 1-motives in the sense of Deligne $[\Lambda\rightarrow \mathcal{A}]$ and $[\Lambda'\rightarrow \mathcal{A}]$, which will give two equal torsion 1-motives. 
%\end{remark}
 \begin{remark}
The construction of the wild 1-motive for semiabelian variety which is isogenous to a product of 2-dimensional torus and abelian variety, the second author has presented in \cite[Remark 6.4, p. 248]{B}. This construction links previous constructions in this section.%from {\color{red} above} {\color{gray} Remark \ref{rem4.6} and Remarks \ref{prop4.8} or \ref{rem4.8}}.
\end{remark}

%\begin{remark}
%Observe that for $\Gamma_\infty$ and $\Gamma_\infty'$, which are defined at the end of section \ref{sec3}, we can not construct analogous 1-motives, because these groups are not finitely generated.
%\end{remark}

%\begin{remark}[{\cite[Remark 3.3, p. 151]{BB}}]
 
%---------------------------------------------------------------
%TO BYM USUNĘŁA:
%{\color{gray}\begin{proposition}[{\cite[Remark 6.4, p. 248]{B}}]
%Consider the following 1-motives over ${\rm{spec}}\, \mathcal{O}_{F,S}$ in the sense of P.~Deligne: 
%\[
%[\Lambda\rightarrow \mathcal{G}],\quad [\Lambda'\rightarrow \mathcal{G}],\quad [\Lambda\cap\Lambda'\rightarrow \mathcal{G}]\quad\text{ and }\quad[\Lambda_T\oplus\Lambda'_A\rightarrow \mathcal{G}].
%\]  
%Changing the base to ${\rm spec}\,k_v$ and taking the images of the subgroups
%$\Lambda$, $\Lambda'$, $\Lambda\cap\Lambda'$, $\Lambda_T\oplus\Lambda'_A$ in~$G_{v}(k_v)$ via $r_v$ (for $v\notin S$), we obtain torsion 1-motives:
%\[
%[r_v(\Lambda)\rightarrow G_v], \quad [r_v(\Lambda')\rightarrow G_v], \quad [r_v(\Lambda\cap\Lambda')\rightarrow G_v] \quad \text{ and }\quad[r_v(\Lambda_T\oplus\Lambda'_A)\rightarrow G_v].
%\]
%This counterexample shows that these four torsion 1-motives are all equal, for every $v\notin S$.
%\end{proposition}}
\section{New constructions of wild 1-motives} \label{new constructions}
Let $n\in\N$, $n\geq 3$.
Let $p_1,p_2,\ldots,p_n$ be different prime numbers. Let $F = \Q$ and $S = \{p_1, p_2,  \ldots, p_n\}$ then $\Z_S= \Z[\frac{1}{p_1},\frac{1}{p_2},\ldots, \frac{1}{p_n}]$.
Consider the group scheme 
$\mathbb{G}_{m}^{\;2} := \mathbb{G}_{m} \times_{{\rm{spec}} \, \Z_S} \mathbb{G}_{m}.$ 
Let $N, N'$ be arbitrary natural numbers. 
Consider the following elements in $\mathbb{G}_m^{\;2}(\Z_S)=\Z_S^\times\times \Z_S^\times$:
\begin{align}
\begin{split}
\lambda &= \begin{bmatrix}
1\\
p_1^{N}
\end{bmatrix}, \; \lambda_1 = \begin{bmatrix}
p_1\\
1
\end{bmatrix}, \; \lambda_2 = \begin{bmatrix}
p_2\\
p_1
\end{bmatrix},\; \lambda_3 = \begin{bmatrix}
p_3\\
p_2
\end{bmatrix},\;\ldots,
\\
\lambda_{n-1} &= \begin{bmatrix}
p_{n-1}\\
p_{n-2}
\end{bmatrix},\;
\lambda_n = \begin{bmatrix}
p_n\\
p_{n-1}
\end{bmatrix}, \; \lambda_{n+1} = \begin{bmatrix}
1\\
p_n
\end{bmatrix}, \; \lambda' = \begin{bmatrix}
p_n^{N'}\\
1
\end{bmatrix}.\end{split}
\label{points in Gm^2}\end{align}
Consider the following subgroups of $\Z_S^\times\times \Z_S^\times :$
\begin{align}
\begin{split}
\Lambda :=& \;\lambda^{\Z} \cdot \lambda_1^{\Z} \cdot \lambda_2^{\Z} \cdot \lambda_3^{\Z}\cdot\ldots\cdot\lambda_{n}^{\Z}\cdot \lambda_{n+1}^{\Z},\\
\Lambda' :=&\; \lambda_1^{\Z} \cdot \lambda_2^{\Z} \cdot \lambda_3^{\Z}\cdot\ldots\cdot\lambda_{n}^{\Z}\cdot \lambda_{n+1}^{\Z}\cdot \lambda'^{\Z}.
\end{split}
\label{subgroups in Gm^2}
\end{align}
\begin{remark}
Observe that $\lambda, \lambda_1,\lambda_2,\ldots,\lambda_{n+1},\lambda'$ are linearly independent over $\Z$ in $\Z_S^\times\times \Z_S^\times$, so $\lambda,\lambda'\notin \Lambda\cap\Lambda'$ and:
\begin{align*}
\Lambda\cap \Lambda'&= \lambda_1^{\Z} \cdot \lambda_2^{\Z} \cdot \lambda_3^{\Z}\cdot\ldots\cdot\lambda_{n}^{\Z}\cdot \lambda_{n+1}^{\Z},\\
\Lambda\cdot \Lambda'&= \lambda^{\Z}\cdot\lambda_1^{\Z} \cdot \lambda_2^{\Z} \cdot \lambda_3^{\Z}\cdot\ldots\cdot\lambda_{n}^{\Z}\cdot \lambda_{n+1}^{\Z}\cdot\lambda'^{\Z}.
\end{align*}
Hence 
\begin{equation}
{\rm rk}_{\Z}\, \Lambda = {\rm rk}_{\Z}\, \Lambda' = n+2,
\label{rank}\end{equation}
\begin{equation}
{\rm rk}_{\Z}\, \Lambda\cap\Lambda'= n+1 \quad \text{and} \quad {\rm rk}_{\Z}\, \Lambda\cdot\Lambda'= n+3.
\label{rank cap}
\end{equation}
\label{ranks of lambdas}
\end{remark}
\begin{theorem} \label{wild 1-motives for tori}
Let $\mathcal{T}=\mathbb{G}_m^2=\mathbb{G}_m\times_{\spec \,\mathbb{Z}_S}\mathbb{G}_m$.
%{\color{gray} Let $n\in\mathbb{N}\setminus\{1,2\}$. Let $S=\{p_1,p_2,\ldots,p_n\}$, where $p_1,p_2,\ldots,p_n$ are prime numbers. Let $\mathbb{Z}_S=\mathbb{Z}[\frac{1}{p_1},\frac{1}{p_2},\ldots,\frac{1}{p_n}]$ and $\mathcal{T}=\mathbb{G}_m^2=\mathbb{G}_m\times_{{\rm spec }\,\mathbb{Z}_S}\mathbb{G}_m$, and let $N,N'\in\mathbb{N}$ be arbitrary natural numbers. Define elements $\lambda, \lambda_1,\lambda_2,\ldots,\lambda_{n}, \lambda_{n+1}, \lambda'\in \mathbb{G}_m^2(\mathbb{Z}_S)$ and subgroups $\Lambda$ and $\Lambda'$ of $\Z_S^\times\times \Z_S^\times$ as in \eqref{points in Gm^2} and \eqref{subgroups in Gm^2}, respectively.} 
Consider the following 1-motives over $\rm{spec}\, \mathbb{Z}_S$: $[\Lambda\rightarrow \mathcal{T}]$, $[\Lambda'\rightarrow \mathcal{T}]$, $[\Lambda\cap\Lambda'\rightarrow \mathcal{T}]$ and $[\Lambda\cdot\Lambda'\rightarrow \mathcal{T}]$ in the sense of P. Deligne. Then after changing the base to ${\rm spec}\,\mathbb{F}_p$ (for $p\notin S$) and taking the images of the subgroups $\Lambda,\;\Lambda',\;\Lambda\cap\Lambda',\;\Lambda\cdot\Lambda'$ in $T_{p}(\mathbb{F}_p)$ via $r_p$, the torsion 1-motives  $[r_p(\Lambda)\rightarrow T_p]$, $[r_p(\Lambda')\rightarrow T_p]$, $[r_p(\Lambda\cap\Lambda')\rightarrow T_p]$ and $[r_p(\Lambda\cdot\Lambda')\rightarrow T_p]$ are all equal, for each $p\notin S$.
\end{theorem}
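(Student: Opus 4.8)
My plan is to deduce the theorem from Theorem \ref{thm} after a chain of formal reductions. First I would note that, for each $p\notin S$, the reduced torsion 1-motive $[r_p(\Lambda_\alpha)\to T_p]$ is determined by the finite subgroup $r_p(\Lambda_\alpha)\subseteq T_p(\mathbb F_p)=\mathbb F_p^\times\times\mathbb F_p^\times$ together with its inclusion into the torus $T_p$ (the semiabelian part being the same $T_p$ in all four cases), so equality of the four torsion 1-motives $[r_p(\Lambda)\to T_p]$, $[r_p(\Lambda')\to T_p]$, $[r_p(\Lambda\cap\Lambda')\to T_p]$, $[r_p(\Lambda\cdot\Lambda')\to T_p]$ is equivalent to equality of the four subgroups $r_p(\Lambda)$, $r_p(\Lambda')$, $r_p(\Lambda\cap\Lambda')$, $r_p(\Lambda\cdot\Lambda')$ of $\mathbb F_p^\times\times\mathbb F_p^\times$. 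Since $r_p$ is a group homomorphism, the inclusions $\Lambda\cap\Lambda'\subseteq\Lambda\subseteq\Lambda\cdot\Lambda'$ and $\Lambda\cap\Lambda'\subseteq\Lambda'\subseteq\Lambda\cdot\Lambda'$ give $r_p(\Lambda\cap\Lambda')\subseteq r_p(\Lambda)\subseteq r_p(\Lambda\cdot\Lambda')$ and likewise for $\Lambda'$, so it suffices to prove the single equality $r_p(\Lambda\cdot\Lambda')=r_p(\Lambda\cap\Lambda')$ for every $p\notin S$.

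By Remark \ref{ranks of lambdas} we have $\Lambda\cdot\Lambda'=\lambda^{\Z}\cdot(\Lambda\cap\Lambda')\cdot\lambda'^{\Z}$, so the last equality is equivalent to the two memberships $r_p(\lambda)\in r_p(\Lambda\cap\Lambda')$ and $r_p(\lambda')\in r_p(\Lambda\cap\Lambda')$. Here I would use the identities $\lambda\cdot\lambda_1^{N}=(p_1^{N},p_1^{N})$ and $\lambda'\cdot\lambda_{n+1}^{N'}=(p_n^{N'},p_n^{N'})$ in $\mathbb G_m^2(\Z_S)$; since $\lambda_1,\lambda_{n+1}\in\Lambda\cap\Lambda'$, these two memberships become, respectively, $(\bar p_1^{N},\bar p_1^{N})\in r_p(\Lambda\cap\Lambda')$ and $(\bar p_n^{N'},\bar p_n^{N'})\in r_p(\Lambda\cap\Lambda')$. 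As $r_p(\Lambda\cap\Lambda')=\langle(\bar p_1,1),(\bar p_2,\bar p_1),\dots,(\bar p_n,\bar p_{n-1}),(1,\bar p_n)\rangle$, the first of these unwinds to the solvability in $\mathbb F_p^\times$ of the system $\prod_{j=1}^{n}\bar p_j^{a_j}=\bar p_1^{N}$ and $\prod_{j=1}^{n}\bar p_j^{a_{j+1}}=\bar p_1^{N}$ for suitable integers $a_1,\dots,a_{n+1}$, which is exactly an instance of the multiple base discrete logarithm problem with the possibly large base $\bar p_1^{N}$.

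At this point I would invoke Theorem \ref{thm}, which supplies such a solution for every $p\notin S$, giving $r_p(\lambda)\in r_p(\Lambda\cap\Lambda')$; the companion statement $r_p(\lambda')\in r_p(\Lambda\cap\Lambda')$ follows by applying the same theorem after interchanging the two $\mathbb G_m$-factors and reversing the order of the primes $p_1,\dots,p_n$, an operation that permutes $\lambda_1,\dots,\lambda_{n+1}$ among themselves (hence fixes $\Lambda\cap\Lambda'$) and turns $\lambda$ into $\lambda'$ with $N$ replaced by $N'$. The reductions of the first two paragraphs are routine; the real obstacle is the solvability of that discrete logarithm system for all $p\notin S$, all tuples of distinct primes and all exponents --- note that globally $\lambda\notin\Lambda\cap\Lambda'$, since by Remark \ref{ranks of lambdas} the points $\lambda,\lambda_1,\dots,\lambda_{n+1},\lambda'$ are $\Z$-linearly independent, so the membership must genuinely be forced prime by prime. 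This is precisely the local-to-global phenomenon handled by Theorem \ref{thm} and analyzed in Section \ref{appendix}.
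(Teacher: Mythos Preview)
Your proposal is correct and follows essentially the same route as the paper: reduce equality of the torsion 1-motives to equality of the subgroups $r_p(\Lambda)=r_p(\Lambda')=r_p(\Lambda\cap\Lambda')=r_p(\Lambda\cdot\Lambda')$, then reduce that (via the inclusion chain and the description $\Lambda\cdot\Lambda'=\lambda^{\Z}\cdot(\Lambda\cap\Lambda')\cdot\lambda'^{\Z}$ from Remark~\ref{ranks of lambdas}) to the two memberships $r_p(\lambda),\,r_p(\lambda')\in r_p(\Lambda\cap\Lambda')$, and invoke Theorem~\ref{thm}. Two small remarks: the detour through the diagonal elements $(p_1^{N},p_1^{N})$ and $(p_n^{N'},p_n^{N'})$ is harmless but unnecessary, and Theorem~\ref{thm} already asserts \emph{both} memberships, so you need not re-derive the $\lambda'$ case via the swap-and-reverse symmetry (though that symmetry is exactly how the paper proves the second half of Theorem~\ref{thm} itself).
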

%\begin{proof}
%It follows by Theorem \ref{thm}.
%\end{proof}
The proof of Theorem \ref{wild 1-motives for tori} is a consequence of the following technical result.

\begin{theorem}
For each $p \notin S$, let 
\[
r_p:\; \Z_S^\times\times \Z_S^\times\rightarrow \F_p^\times\times \F_p^\times
\] 
be the natural reduction map. Then for all $p\notin S$:
\[
r_p(\lambda)\in r_p(\Lambda\cap \Lambda') \quad \text{and}\quad r_p(\lambda ')\in r_p(\Lambda\cap \Lambda').
\]
\label{thm}
\end{theorem}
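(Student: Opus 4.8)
The plan is, for a fixed $p\notin S$, to translate the two membership assertions into the solvability of a system of linear congruences modulo $p-1$ in the discrete logarithms of the reductions of $p_1,\dots,p_n$, and then to solve that system one prime at a time, exploiting the ``chain'' shape of $\lambda_1,\dots,\lambda_{n+1}$. First I would record, straight from the definitions of $\lambda,\lambda_1,\dots,\lambda_{n+1},\lambda'$, the identities
\[
\lambda\cdot\lambda_1^{\,N}=\begin{bmatrix}p_1^{N}\\ p_1^{N}\end{bmatrix},\qquad
\lambda'\cdot\lambda_{n+1}^{\,N'}=\begin{bmatrix}p_n^{N'}\\ p_n^{N'}\end{bmatrix},\qquad
\lambda_1\lambda_2\cdots\lambda_{n+1}=\begin{bmatrix}p_1p_2\cdots p_n\\ p_1p_2\cdots p_n\end{bmatrix}.
\]
Writing $\overline a$ for the reduction modulo $p$ of $a\in\Z_S^{\times}$ and using $\lambda_1,\lambda_{n+1}\in\Lambda\cap\Lambda'$, the first two identities show that $r_p(\lambda)\in r_p(\Lambda\cap\Lambda')$ is equivalent to the diagonal element $(\overline{p_1}^{\,N},\overline{p_1}^{\,N})$ lying in $r_p(\Lambda\cap\Lambda')$, and $r_p(\lambda')\in r_p(\Lambda\cap\Lambda')$ is equivalent to $(\overline{p_n}^{\,N'},\overline{p_n}^{\,N'})$ lying in $r_p(\Lambda\cap\Lambda')$. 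Moreover, reversing the order $p_1,\dots,p_n\rightsquigarrow p_n,\dots,p_1$ and swapping the two copies of $\mathbb{G}_m$ interchanges the $\lambda$-case and the $\lambda'$-case, so it is enough to treat $r_p(\lambda)$.

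Next I would translate this into discrete logarithms. Fix a generator $g$ of $\F_p^{\times}$, put $m=p-1$, and let $d_j\in\Z/m\Z$ be the discrete logarithm of $\overline{p_j}$ to base $g$, for $1\le j\le n$. With the convention $p_0=p_{n+1}=1$, so that $\lambda_i=(p_i,p_{i-1})$ for all $i$, one has
\[
\prod_{i=1}^{n+1}r_p(\lambda_i)^{\,c_i}=\Bigl(\prod_{j=1}^{n}\overline{p_j}^{\,c_j},\ \prod_{j=1}^{n}\overline{p_j}^{\,c_{j+1}}\Bigr),
\]
so $r_p(\lambda)=(1,\overline{p_1}^{\,N})$ lies in $r_p(\Lambda\cap\Lambda')$ if and only if there exist integers $c_1,\dots,c_{n+1}$ with
\[
\sum_{j=1}^{n}c_j\,d_j\equiv 0\pmod m
\qquad\text{and}\qquad
\sum_{j=1}^{n}c_{j+1}\,d_j\equiv N\,d_1\pmod m .
\]
This is exactly a two-target multiple base discrete logarithm problem for the (possibly very large) bases $\overline{p_1},\dots,\overline{p_n}$, and the theorem amounts to the assertion that it is solvable for every $p\notin S$.

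Finally I would solve the system. Eliminating $c_1$ from the first congruence and $c_{n+1}$ from the second reduces it to finding integers $c_2,\dots,c_n$ ($n-1\ge 2$ of them, since $n\ge 3$) subject to
\[
\gcd(d_1,m)\ \Big|\ \sum_{j=2}^{n}c_j d_j
\qquad\text{and}\qquad
\gcd(d_n,m)\ \Big|\ N d_1-\sum_{j=2}^{n}c_j d_{j-1};
\]
by the Chinese Remainder Theorem it suffices to do this one prime $\ell$ at a time, that is, modulo $\ell^{a}$ and $\ell^{b}$ where $a=v_\ell(\gcd(d_1,m))$ and $b=v_\ell(\gcd(d_n,m))$. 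A case analysis on the valuations $v_\ell(d_1),\dots,v_\ell(d_n),v_\ell(m)$ shows that a common solution always exists. The intuition is that the chain shape of $\lambda_1,\dots,\lambda_{n+1}$ makes the two conditions ``dual'': one has $\ell^{a}\mid d_1$ (hence $\ell^{a}\mid Nd_1$), coming from the numerator of $\lambda_1$, and $\ell^{b}\mid d_n$, coming from the far end of the chain, so the terms at the two ends can be neutralised; and the coefficient of $c_j$ is $d_j$ in the first condition but $d_{j-1}$ in the second, so this index shift leaves enough room among the interior variables $c_2,\dots,c_{n-1}$ (together with $c_n$) to satisfy both congruences---in particular, no divisibility condition on $N$ is ever required. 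Reassembling the local solutions by the Chinese Remainder Theorem and undoing the previous two reductions yields $r_p(\lambda),r_p(\lambda')\in r_p(\Lambda\cap\Lambda')$ for all $p\notin S$. The step that demands genuine care---and which I expect to be the main obstacle---is this last one: making the $\ell$-adic case analysis uniform in $p$, i.e.\ ruling out a ``bad'' prime $p$ at which the multiplicative arithmetic of $\overline{p_1},\dots,\overline{p_n}$ in $\F_p^{\times}$ obstructs the system. That is precisely the local-to-global statement for the multiple base discrete logarithm problem established in Section~\ref{appendix}.
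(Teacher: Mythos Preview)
Your setup is fine: the translation to a pair of linear congruences modulo $m=p-1$ in the discrete logarithms $d_j$ is exactly how the paper begins, and the symmetry reducing the $\lambda'$ case to the $\lambda$ case is correct. The gap is in the last step. You assert that ``a case analysis on the valuations $v_\ell(d_1),\dots,v_\ell(d_n),v_\ell(m)$ shows that a common solution always exists,'' give only heuristic intuition for this, and then close by saying this ``is precisely the local-to-global statement for the multiple base discrete logarithm problem established in Section~\ref{appendix}.'' But Section~\ref{appendix} proves nothing independently: every result there is stated as a corollary or remark \emph{derived from} Theorem~\ref{thm}. The appeal is circular, and the promised $\ell$-adic case analysis is never actually carried out, so the argument is incomplete precisely at the point you yourself flag as the main obstacle.

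The paper avoids the prime-by-prime CRT analysis altogether by producing an explicit integral solution. In the generic case (all $c_i\not\equiv 0\pmod{p-1}$, in the paper's notation $c_i$ for your $d_i$) it sets $D_0=\gcd(c_1,\dots,c_n)$, proves the elementary divisibility
\[
D:=\gcd\!\left(\tfrac{c_1^2}{D_0},\tfrac{c_1c_2}{D_0},\dots,\tfrac{c_1c_{n-1}}{D_0},c_n\right)\ \big|\ c_1,
\]
and hence solves the single linear Diophantine equation $\tfrac{c_1^2}{D_0}x_2+\dots+\tfrac{c_1c_{n-1}}{D_0}x_n+c_nx_{n+1}=Nc_1$ over $\Z$. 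Putting $R_i=\tfrac{c_1}{D_0}x_i$ for $2\le i\le n$, $R_{n+1}=x_{n+1}$, and $R_1=-\sum_{i\ge 2}\tfrac{c_i}{D_0}x_i$ makes both congruences into exact integer equalities. The degenerate case where some $c_j\equiv 0\pmod{p-1}$ is handled by taking the smallest such $j$, setting $R_{j+1}=\dots=R_{n+1}=0$, and rerunning the same construction with $n$ replaced by $j-1$ (the cases $j=1,2$ being trivial). This explicit construction is the missing content of your sketch.
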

\begin{proof}
Let $p\notin S$.
Let $\overline{\lambda}:=r_p(\lambda), \,\overline{\lambda'}:=r_p(\lambda')$ and $\overline{\lambda_i}:=r_p(\lambda_i)$ for $i=1,\ldots, n+1$.
The equation
\begin{equation}
\overline{\lambda}=\overline{\lambda_1}^{\,R_1}\,\overline{\lambda_2}^{\,R_2}\,\overline{\lambda_3}^{\,R_3}\,\ldots\,\overline{\lambda_{n+1}}^{\,R_{n+1}}
\label{rem 1 for 2 primes}\end{equation}
is equivalent to the system of congruences:
\begin{align}
\label{ncongruencesmodp}
\begin{split}
1&\equiv p_1^{R_1}\,p_2^{R_2}\,\ldots\,p_n^{R_n}\mod{p},\\
p_1^N&\equiv p_1^{R_2}\,p_2^{R_3}\,\ldots\,p_n^{R_{n+1}}\mod{p}.
\end{split}
\end{align}
Let $g$ be a generator of $(\Z/p)^\times$, i.e.:
\[
(\Z/p)^\times \, = \,\, <g> \,\, \cong\Z/(p-1).
\]
The system of congruences \eqref{ncongruencesmodp} is equivalent to the system:
\begin{align}
\label{ncongruences}
\begin{split}
0&\equiv c_1R_1+c_2R_2+\ldots+c_nR_n\mod{(p-1)},\\
Nc_1&\equiv c_1R_2 + c_2R_3+\ldots+c_nR_{n+1}\mod{(p-1)}, 
\end{split}
\end{align}
where $p_1=g^{c_1}, \, p_2=g^{c_2},\,\ldots,\,p_n=g^{c_n}$ for some $c_1,\,c_2,\,\ldots, \,c_n$.
The proof proceeds in two steps.

\textbf{Step 1.}
Assume that $c_i\not\equiv 0\pmod{p-1}$ for all $i=1,\ldots,n$.
Let $D_0:=\gcd(c_1,c_2,\ldots,c_n)$. Then $c_i=k_iD_0$ for $i=1,2,\ldots,n$, with $\gcd(k_1,k_2,\ldots,k_n)=1$.
Define 
\begin{equation}
D:=\gcd\left(\frac{c_1^2}{D_0}, \frac{c_1c_2}{D_0},\ldots,\frac{c_1c_{n-1}}{D_0},c_n\right).
\label{defD}\end{equation}
Observe that:
\[
D=D_0\cdot\gcd(k_1^2,k_1k_2,k_1k_3,\ldots,k_1k_{n-1},k_n).
\]
Let $d:=\gcd(k_1^2,k_1k_2,k_1k_3,\ldots,k_1k_{n-1},k_n)$.
First we show that: $d\mid k_1$.
Let $q$ be a prime number such that $q|d$.
Then $q\mid k_1^2,\; q\mid k_1k_2,\; q\mid k_1k_3,\;\ldots,\; q\mid k_1k_{n-1},\; q\mid k_n$.
By $q \mid k_1^2$, we obtain $q\mid k_1$.
Assume that $q^m\mid d$ for some integer $m>1$.
Then as above $q^m\mid k_1^2$.
Suppose that $q^m\nmid k_1$. 
Then $q^r\| k_1$ (i.e., $q^r\mid k_1$ and $q^{r+1}\nmid k_1$) for an integer $1\leq r< m$.  
Thus $q^{m-r}\mid k_2,\; q^{m-r}\mid k_3,\; \ldots,\; q^{m-r}\mid k_{n-1},\; q^{m}\mid k_n$.
So $q\mid\gcd(k_1,k_2,\ldots,k_n)$.
Contradiction. So $q^m\mid k_1$. It implies that $d\mid k_1$.
Hence:
\begin{equation}
D\mid c_1.
\label{Ddividesc_1}\end{equation}

%Recall the following well-known fact:
%\begin{fact}
%Let $s\in\N$ and $a_1,a_2,\ldots,a_s, c\in \Z$. Then the equation
%\[
%a_1x_1+a_2x_2+\ldots+a_sx_s=c
%\]
%has integer solution if and only if $\gcd(a_1,a_2,\ldots,a_s)\mid c$.
%\label{basicprop}\end{fact}
By \eqref{defD} and \eqref{Ddividesc_1}, the equation:
\[
\frac{c_1^2}{D_0}x_2+\frac{c_1c_2}{D_0}x_3+\ldots+\frac{c_1c_{n-1}}{D_0}x_n+c_nR_{n+1}=Nc_1
\]
has integer solution:
\[
(x_2,x_3, \ldots, x_n, R_{n+1})=(r_2,r_3,\ldots,r_n, r_{n+1}).
\]
Putting
\begin{align*}
R_1&=-\frac{c_2}{D_0}r_2-\frac{c_3}{D_0}r_3-\ldots-\frac{c_n}{D_0}r_n,\\ 
R_2&=\frac{c_1}{D_0}r_2,\\ 
R_3&=\frac{c_1}{D_0}r_3,\\ 
&\vdots\\
R_n&=\frac{c_1}{D_0}r_n,\\ 
R_{n+1}&=r_{n+1}
\end{align*}
we observe, by simple computation, that $R_1 , R_2 ,\ldots, R_{n+1}$ satisfy the system of congruences \eqref{ncongruences}.

\textbf{Step 2.}
Let $j\in\{1,2,\ldots,n\}$ be the smallest number such that $c_j \equiv 0\pmod{p-1}$.
\begin{itemize}
\item If $j=1$ then we have the following solution:
\[
(R_1,R_2,R_3,\ldots, R_n, R_{n+1})=(0,0,0,\ldots,0,0).
\]
\item If $j=2$ then we have the following solution:
\[
(R_1,R_2,R_3,R_4,\ldots, R_n, R_{n+1})=(0,N,0,0,\ldots,0,0).
\]
\item If $j=3$ then put $R_i=0$ for all $4\leq i\leq n+1$. To compute $R_1, R_2, R_3$ follow A.~Schinzel construction \cite[pp. 419-420]{Sch} or the construction from step 1. For convenience of the reader we give computations below. Now, \eqref{ncongruences} has the following form:
\begin{align*}
0&\equiv c_1R_1+c_2R_2\mod{(p-1)},\\
Nc_1&\equiv c_1R_2 + c_2R_3\mod{(p-1)}, \quad \text{ with } c_1, c_2\ne 0.
\end{align*}
In this case $D_0:=\gcd(c_1, c_2)$, $D:=\gcd(\frac{c_1^2}{D_0}, c_2)$ and $d=(k_1^2, k_2)$, where $c_1=k_1D_0$, $c_2=k_2D_0$ with $\gcd(k_1,k_2)=1$. %By the analogues  
We see that $d=1$ and $D=D_0$, hence $D\mid c_1$. So % by Lemma \ref{basicprop},
the equation:
\[
\frac{c_1^2}{D_0}x_2+c_2R_{3}=Nc_1
\]
has integer solution $(x_2, R_3)=(r_2, r_3)$. Taking:
\[
R_1=-\frac{c_2}{D_0}r_2,\quad R_2=\frac{c_1}{D_0}r_2, \quad R_3=r_3
\]
we obtain a solution $(R_1, R_2, R_3, R_4, R_5  \ldots, R_{n+1}) = (-\frac{c_2}{D_0}r_2, \frac{c_1}{D_0}r_2, r_3, 0, 0,\ldots, 0)$ of \eqref{ncongruences}.
\item If $j\in\{4,\ldots, n\}$ then set $R_i=0$ for all $j+1\leq i\leq n+1$. Then we obtain the following congruences instead of \eqref{ncongruences}:
\begin{align}
\label{jcongruences}
\begin{split}
0&\equiv c_1R_1+c_2R_2+\ldots +c_{j-1}R_{j-1} \pmod{p-1}\\
Nc_1&\equiv c_1R_2+c_2R_3+\ldots +c_{j-1}R_{j} \pmod{p-1}.
\end{split}
\end{align}
To obtain solution of \eqref{jcongruences} replace $n$ by $j-1$ in \eqref{ncongruences} and apply step 1 with this replacement.
\end{itemize}
\bigskip

Now we must show that $\overline{\lambda'}\in r_p(\Lambda\cap\Lambda')$.
The equation
\begin{equation}
\overline{\lambda'}=\overline{\lambda_1}^{R_{n+1}}\,\overline{\lambda_2}^{\,R_n}\,\overline{\lambda_3}^{\,R_{n-1}}\,\ldots \overline{\lambda_{n+1}}^{\,R_1}
\label{rem 2 for 2 primes}\end{equation}
is equivalent to the system of congruences:
\begin{align}
\label{ncongruences2'}
\begin{split}
1&\equiv p_n^{R_1}\,p_{n-1}^{R_2}\,\ldots\,p_1^{R_{n}}\mod{p},\\
p_n^{N'}&\equiv p_n^{R_2}\,p_{n-1}^{R_3}\,\ldots\,p_1^{R_{n+1}}\mod{p}.
\end{split}
\end{align}
It is equivalent to the system:
\begin{align}
\label{ncongruences2}
\begin{split}
0&\equiv c_1R_1+c_2R_2+\ldots+c_nR_n\mod{(p-1)},\\
N'c_1&\equiv c_1R_2 + c_2R_3+\ldots+c_nR_{n+1}\mod{(p-1)}, 
\end{split}
\end{align}
where $p_n=g^{c_1}, \, p_{n-1}=g^{c_2},\,\ldots, \,p_1=g^{c_{n}}$ in $(\Z/p)^\times \, = \,\, <g> \,\, \cong\Z/(p-1)$.
Observe that the systems of congruences \eqref{ncongruences} and \eqref{ncongruences2} are basically the same (we have $N'$ in place of $N$). Observe that computations concerning solutions of \eqref{ncongruences} do not depend on $N$. Hence using the same computations we obtain a solution $R_1,\,R_2,\,R_3,\,\ldots,\,R_{n+1}$ of \eqref{ncongruences2}.
\end{proof}
\begin{corollary}\label{maincor} For each prime number $p\notin S$ we have the following equality:
\[
r_p(\Lambda)=r_p(\Lambda')=r_p(\Lambda\cap\Lambda')=r_p(\Lambda\cdot \Lambda').
\]
\end{corollary}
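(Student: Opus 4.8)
The plan is to deduce Corollary \ref{maincor} formally from Theorem \ref{thm}, using only that $r_p$ is a group homomorphism together with the explicit descriptions of $\Lambda\cap\Lambda'$ and $\Lambda\cdot\Lambda'$ from Remark \ref{ranks of lambdas}. First I would record the evident chain of inclusions of subgroups of $\Z_S^\times\times\Z_S^\times$:
\[
\Lambda\cap\Lambda'\ \subseteq\ \Lambda\ \subseteq\ \Lambda\cdot\Lambda',\qquad \Lambda\cap\Lambda'\ \subseteq\ \Lambda'\ \subseteq\ \Lambda\cdot\Lambda',
\]
which are immediate from the generating sets in \eqref{subgroups in Gm^2}. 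Applying the homomorphism $r_p$ preserves these inclusions, so
\[
r_p(\Lambda\cap\Lambda')\ \subseteq\ r_p(\Lambda)\ \subseteq\ r_p(\Lambda\cdot\Lambda'),\qquad r_p(\Lambda\cap\Lambda')\ \subseteq\ r_p(\Lambda')\ \subseteq\ r_p(\Lambda\cdot\Lambda').
\]
Hence all four groups are forced to coincide as soon as one establishes the single reverse inclusion $r_p(\Lambda\cdot\Lambda')\subseteq r_p(\Lambda\cap\Lambda')$.

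For that last inclusion I would argue on generators. The group $\Lambda\cdot\Lambda'$ is generated by $\lambda,\lambda_1,\dots,\lambda_{n+1},\lambda'$, so $r_p(\Lambda\cdot\Lambda')$ is generated by the images of these elements. Now $r_p(\lambda_i)\in r_p(\Lambda\cap\Lambda')$ trivially, since $\lambda_i\in\Lambda\cap\Lambda'$ for every $i=1,\dots,n+1$; and $r_p(\lambda)\in r_p(\Lambda\cap\Lambda')$ together with $r_p(\lambda')\in r_p(\Lambda\cap\Lambda')$ is exactly the conclusion of Theorem \ref{thm}. Since $r_p(\Lambda\cap\Lambda')$ is a subgroup of $\F_p^\times\times\F_p^\times$ containing a generating set of $r_p(\Lambda\cdot\Lambda')$, it contains $r_p(\Lambda\cdot\Lambda')$ itself. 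Combined with the inclusions above, this yields the desired chain of equalities.

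I do not expect any genuine obstacle here: once Theorem \ref{thm} is in hand, the corollary is a purely formal ``sandwich'' argument, the only thing to verify being that $\Lambda$ and $\Lambda'$ really do lie between $\Lambda\cap\Lambda'$ and $\Lambda\cdot\Lambda'$, which the explicit generators make transparent. The one point worth stating carefully in the write-up is that these inclusions hold at the level of subgroups of $\Z_S^\times\times\Z_S^\times$ before reduction, so that applying $r_p$ loses nothing and the reverse inclusion supplied by Theorem \ref{thm} suffices to close the loop.
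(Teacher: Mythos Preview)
Your argument is correct and is precisely the routine unpacking of the paper's one-line proof, which simply says ``It follows immediately from Theorem \ref{thm}.'' You have just written out the evident sandwich $r_p(\Lambda\cap\Lambda')\subseteq r_p(\Lambda),r_p(\Lambda')\subseteq r_p(\Lambda\cdot\Lambda')$ and used Theorem \ref{thm} on generators to close it, which is exactly what the authors intend.
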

\begin{proof}
It follows immediately from Theorem \ref{thm}.
\end{proof}
%{\color{blue}
%\begin{remark}
%For any set of different prime numbers $S=\{p_1, p_2, \ldots, p_n\}$ we can find such natural numbers $N, N'$ that system of congruences \eqref{ncongruencesmodp}, \eqref{ncongruences2'} will have solutions also for $p\in S$. Indeed, let $N=N':=\varphi(p_1 p_2\ldots p_n)$, where $\varphi(\cdot)$ is the Euler function. Then we obtain the following solutions:
%\begin{center}
%\begin{tabular}{|c|c|}
%\hline
%&solutions of \eqref{ncongruencesmodp} modulo $p_i$\\
%&$(R_1,\,R_2,\,R_3,\ldots,\,R_n, \,R_{n+1})$\\
%\hline
%$i=1$& $(0,\,\varphi(p_1),\,0,\ldots,\,0,\,0)$\\
%\hline
%$i\ne 1$& $(0,\,0,\,0,\ldots,\,0,\,0)$\\
%\hline
%\end{tabular}\\[0.3cm]
%\begin{tabular}{|c|c|}
%\hline
%&solutions of \eqref{ncongruences2'} modulo $p_i$\\
%& $(R_1,\,R_2,\,R_3,\ldots,\,R_n, \,R_{n+1})$\\
%\hline
%$i=n$& $(0,\,\varphi(p_n),\,0,\ldots,\,0,\,0)$\\
%\hline
%$i\ne n$& $(0,\,0,\,0,\ldots,\,0,\,0)$\\
%\hline
%\end{tabular}\\[0.3cm]
%\end{center}
%\label{rem2.4}\end{remark}
%\begin{remark} Let $N=N'=\varphi(p_1 p_2\ldots p_n)$. 
%By Theorem \ref{thm} and Remark \ref{rem2.4}, multiple base $(\lambda_1,\ldots,\lambda_{n+1})$ discrete logarithm of the elements $\lambda$ and $\lambda'$ has solutions modulo $p$, for all prime numbers $p$. However, multiple base $(\lambda_1,\ldots,\lambda_{n+1})$ discrete logarithm of $\lambda$ and $\lambda'$ does not have solutions. 
%\end{remark}}
\begin{remark} The proof of \eqref{rem 1 for 2 primes} and \eqref{rem 2 for 2 primes} is also correct for $n=2$ (cf. the case $j=3$ in the proof or \cite[pp. 419-420]{Sch}). 
In this case:
\[
\lambda = \begin{bmatrix}
1\\
p_1^{N}
\end{bmatrix}, \; 
\lambda_1 = \begin{bmatrix}
p_1\\
1
\end{bmatrix}, \; 
\lambda_2 = \begin{bmatrix}
p_2\\
p_1
\end{bmatrix},\; 
\lambda_3 = \begin{bmatrix}
1\\
p_2
\end{bmatrix},\;
\lambda' = \begin{bmatrix}
p_2^{N'}\\
1
\end{bmatrix},
\]
\begin{align*}
\Lambda :=& \;\lambda^{\Z} \cdot \lambda_1^{\Z} \cdot \lambda_2^{\Z} \cdot \lambda_3^{\Z},\\
\Lambda' :=&\; \lambda_1^{\Z} \cdot \lambda_2^{\Z} \cdot \lambda_3^{\Z}\cdot\lambda'^{\Z}.
\end{align*}
Elements $\lambda,\lambda_1,\lambda_2,\lambda_3$ are independent over $\Z$ and $\lambda_1,\lambda_2,\lambda_3, \lambda'$ are also independent over $\Z$.  Observe that
\[
{\rm rk}_{\Z}\,\Z[{\textstyle\frac{1}{p_1},\frac{1}{p_2}}]^\times\times \Z[{\textstyle\frac{1}{p_1},\frac{1}{p_2}}]^\times =4.
\] 
However there is a significant difference between the case $n=2$ and the case $n>2$.
Namely, for $n=2$, taking for example $N=N'$, we easily compute that:
\[
\Lambda=\Lambda'=\Lambda\cap\Lambda'=\Lambda\cdot\Lambda'.
\]
The reason for the difference between the case $n>2$ and the case $n=2$ is the following. Namely, for all $n\geq 2$:
\[
{\rm rk}_{\Z}\,\Z[{\textstyle\frac{1}{p_1},\frac{1}{p_2},\ldots,\frac{1}{p_n}}]^\times\times \Z[{\textstyle\frac{1}{p_1},\frac{1}{p_2},\ldots, \frac{1}{p_n}}]^\times =2n.
\]
However: 
\begin{align*}
{\rm rk}_{\Z}\, \Lambda= {\rm rk}_{\Z}\, \Lambda'< 2n, \quad \text{for }n>2,\\
{\rm rk}_{\Z}\, \Lambda= {\rm rk}_{\Z}\, \Lambda'= 2n, \quad \text{for }n=2.
\end{align*} 
\end{remark}
%{\color{gray}\begin{remark}
%For $n>2$, $\Lambda$ and $\Lambda'$ are not commensurable, cf. \eqref{rank}, \eqref{rank cap}. However, as observed in Corollary \ref{maincor}, for all prime numbers $p\notin S$:
%\[
%r_p(\Lambda)=r_p(\Lambda')=r_p(\Lambda\cap\Lambda')=r_p(\Lambda\cdot\Lambda').
%\] 
%\end{remark}
%\begin{remark}
%For more information about commensurability of lattices in Mordell-Weil groups of tori and abelian varieties and relations to local to global principle, please see \cite{BB}.
%\end{remark}}
%\begin{remark}
Let $t\in \N$. Let $S_1, \ldots, S_t$ be a sequence of pairwise disjoint sets of prime numbers, i.e.:
\[
S_j:=\{p_{1j},\ldots, p_{n_{j}j}\},
\] 
such that $n_j>2$ for each $j$.
For fixed $j$, let $N_j,N_j'\in\N$. In the same way as in \eqref{points in Gm^2}, consider the following elements in $\mathbb{G}_m^{\;2}(\Z_{S_j})=\Z_{S_j}^\times\times \Z_{S_j}^\times$:
\[
\lambda_j = \begin{bmatrix}
1\\
p_{1j}^{N_j}
\end{bmatrix}, \; \lambda_{1j} = \begin{bmatrix}
p_{1j}\\
1
\end{bmatrix}, \; \lambda_{2j} = \begin{bmatrix}
p_{2j}\\
p_{1j}
\end{bmatrix},\; \lambda_{3j} = \begin{bmatrix}
p_{3j}\\
p_{2j}
\end{bmatrix},\;\ldots,
\]
\[
\lambda_{(n_j-1)j} = \begin{bmatrix}
p_{(n_j-1)j}\\
p_{(n_j-2)j}
\end{bmatrix},\;
\lambda_{n_jj} = \begin{bmatrix}
p_{n_jj}\\
p_{(n_j-1)j}
\end{bmatrix}, \; \lambda_{(n_j+1)j} = \begin{bmatrix}
1\\
p_{n_jj}
\end{bmatrix}, \; \lambda_j' = \begin{bmatrix}
p_{n_jj}^{N_j'}\\
1
\end{bmatrix}.
\]
Consider the following subgroups of $\Z_{S_j}^\times\times \Z_{S_j}^\times :$
\begin{align}
\begin{split}
\Lambda_j :=& \;\lambda_j^{\Z} \cdot \lambda_{1j}^{\Z} \cdot \lambda_{2j}^{\Z} \cdot \lambda_{3j}^{\Z}\cdot\ldots\cdot\lambda_{n_jj}^{\Z}\cdot \lambda_{(n_j+1)j}^{\Z},\\
\Lambda_j' :=&\; \lambda_{1j}^{\Z} \cdot \lambda_{2j}^{\Z} \cdot \lambda_{3j}^{\Z}\cdot\ldots\cdot\lambda_{n_jj}^{\Z}\cdot \lambda_{(n_j+1)j}^{\Z}\cdot \lambda_j'^{\Z}.
\end{split}\label{groups lambda j and j'}\end{align}
Let 
\begin{equation}
\Sigma_t :=\bigsqcup_{j=1}^t S_j.
\label{sigma_t}\end{equation}
Let 
\begin{align}
\begin{split}
\Gamma_t&:=\Lambda_1\cdot \ldots\cdot\Lambda_t,\\
\Gamma_t'&:=\Lambda_1'\cdot \ldots\cdot\Lambda_t'
\label{gamma_t}\end{split}
\end{align}
be subgroups of $\mathbb{G}_m^{\;2}(\Z_{\Sigma_t})=\Z_{\Sigma_t}^\times\times \Z_{\Sigma_t}^\times$.
Observe that:
\[
\Gamma_t\cap \Gamma_t'=(\Lambda_1\cap\Lambda_1')\cdot \ldots\cdot(\Lambda_t\cap \Lambda'_t),
\]
\[
\Gamma_t\cdot \Gamma_t'=\Lambda_1\cdot\Lambda_1'\cdot \ldots\cdot\Lambda_t\cdot \Lambda'_t.
\]
Notice also that:
\[
{\rm rk}_{\Z}\,\Gamma_t = {\rm rk}_{\Z}\,\Gamma_t' =\sum_{j=1}^t\,(n_j+2),
\]
\[
{\rm rk}_{\Z}\,\Gamma_t\cap \Gamma_t'=\sum_{j=1}^{t}\,(n_j+1),
\]
\[
{\rm rk}_{\Z}\,\Gamma_t\cdot \Gamma_t' =\sum_{j=1}^t \,(n_j+3)
\]
and
\begin{equation}\label{formula for rank of tail}
{\rm rk}_{\Z}\,\Gamma_t/(\Gamma_t\cap \Gamma_t') = {\rm rk}_{\Z}\,\Gamma_t'/(\Gamma_t\cap \Gamma_t')  =t.   
\end{equation}
\begin{corollary}
\label{cor3.1}
For each prime number $p\notin\Sigma_t$, we have the following equality:
\[
r_p(\Gamma_t)=r_p(\Gamma_t')=r_p(\Gamma_t\cap\Gamma_t')=r_p(\Gamma_t\cdot\Gamma_t').
\]
\end{corollary}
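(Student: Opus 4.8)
The plan is to deduce this directly from Corollary~\ref{maincor}, applied one block at a time. First I would observe that since $\Sigma_t=\bigsqcup_{j=1}^t S_j$, a prime $p\notin\Sigma_t$ satisfies $p\notin S_j$ for every $j$, and that the reduction map $r_p\colon\Z_{\Sigma_t}^\times\times\Z_{\Sigma_t}^\times\to\F_p^\times\times\F_p^\times$ restricts, along the inclusion $\Z_{S_j}^\times\times\Z_{S_j}^\times\hookrightarrow\Z_{\Sigma_t}^\times\times\Z_{\Sigma_t}^\times$, to exactly the reduction map to which Theorem~\ref{thm} applies for the set $S_j$ and the data $\lambda_j,\lambda_{1j},\ldots,\lambda_{(n_j+1)j},\lambda_j'$ (here one uses $n_j>2$, which is part of the hypothesis on the sets $S_j$). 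Hence Corollary~\ref{maincor} gives, for each $j\in\{1,\ldots,t\}$,
\[
r_p(\Lambda_j)=r_p(\Lambda_j')=r_p(\Lambda_j\cap\Lambda_j')=r_p(\Lambda_j\cdot\Lambda_j').
\]

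Next I would use that $r_p$ is a homomorphism of abelian groups and therefore commutes with finite products of subgroups: for subgroups $H_1,\ldots,H_t$ of a commutative group one has $r_p(H_1\cdots H_t)=r_p(H_1)\cdots r_p(H_t)$, since every element of $H_1\cdots H_t$ is of the form $x_1\cdots x_t$ with $x_i\in H_i$. Applying this to the definitions \eqref{gamma_t} gives $r_p(\Gamma_t)=\prod_{j=1}^t r_p(\Lambda_j)$ and $r_p(\Gamma_t')=\prod_{j=1}^t r_p(\Lambda_j')$, while the identities recorded just above the statement give $r_p(\Gamma_t\cap\Gamma_t')=\prod_{j=1}^t r_p(\Lambda_j\cap\Lambda_j')$ and $r_p(\Gamma_t\cdot\Gamma_t')=\prod_{j=1}^t r_p(\Lambda_j\cdot\Lambda_j')$. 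Since the four factors agree for each $j$ by the displayed equality above, the four products agree as subgroups of $\F_p^\times\times\F_p^\times$, which is the assertion.

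The one point that needs care is that, in general, $r_p$ does not commute with intersections — one has only the inclusion $r_p(A\cap B)\subseteq r_p(A)\cap r_p(B)$ — so I would be careful to apply $r_p$ to the already-simplified product expression $(\Lambda_1\cap\Lambda_1')\cdots(\Lambda_t\cap\Lambda_t')$ for $\Gamma_t\cap\Gamma_t'$, which is legitimate because the sets $S_j$ are pairwise disjoint (this is precisely what justifies the computation of $\Gamma_t\cap\Gamma_t'$ preceding the Corollary), rather than to the intersection $\Gamma_t\cap\Gamma_t'$ directly. Beyond this observation the argument is purely formal, and I expect no further obstacle apart from the bookkeeping of indices.
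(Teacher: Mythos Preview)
Your proposal is correct and is precisely the intended argument: the paper's own proof is the single line ``It follows by Corollary~\ref{maincor} (cf.\ the proof of Theorem~\ref{thm})'', and what you have written is a careful unpacking of exactly that, applying Corollary~\ref{maincor} block by block and using the product decompositions of $\Gamma_t$, $\Gamma_t'$, $\Gamma_t\cap\Gamma_t'$, $\Gamma_t\cdot\Gamma_t'$ recorded just before the statement. Your caveat about handling the intersection via the pre-established identity $\Gamma_t\cap\Gamma_t'=\prod_j(\Lambda_j\cap\Lambda_j')$ rather than through $r_p$ directly is exactly right.
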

\begin{proof}
It follows by Corollary \ref{maincor} (cf. the proof of Theorem \ref{thm}).
\end{proof}
\begin{remark}\label{rem about tail} Let $T_1, T_2\subset\{1,2,\ldots,t\}$ be such that $T_1\cap T_2=\emptyset$. Observe that we can construct groups $\Lambda_j, \Lambda'_j, \Lambda_i, \Lambda'_i$ (cf. \eqref{groups lambda j and j'}) for $j\in T_1$ and $i\in T_2$ in a very similar way to \cite[pp.~419-420]{Sch}, i.e., 
\begin{align*}
\Lambda_j&=\lambda_j^{\Z} \cdot \lambda_{1j}^{\Z} \cdot \lambda_{2j}^{\Z} \cdot \lambda_{3j}^{\Z}\cdot\ldots\cdot\lambda_{n_jj}^{\Z}\cdot \lambda_{(n_j+1)j}^{\Z},\\
\Lambda_j'&=\lambda_{1j}^{\Z} \cdot \lambda_{2j}^{\Z} \cdot \lambda_{3j}^{\Z}\cdot\ldots\cdot\lambda_{n_jj}^{\Z}\cdot \lambda_{(n_j+1)j}^{\Z}
\end{align*}
and 
\begin{align*}
\Lambda_i&=\lambda_{1i}^{\Z} \cdot \lambda_{2i}^{\Z} \cdot \lambda_{3i}^{\Z}\cdot\ldots\cdot\lambda_{n_ii}^{\Z}\cdot \lambda_{(n_i+1)i}^{\Z},\\
\Lambda_i'&=\lambda_{1i}^{\Z} \cdot \lambda_{2i}^{\Z} \cdot \lambda_{3i}^{\Z}\cdot\ldots\cdot\lambda_{n_ii}^{\Z}\cdot \lambda_{(n_i+1)i}^{\Z}\cdot\lambda_i'^{\Z}.
\end{align*}
For $k\in \{1,2,\ldots,t\}\setminus (T_1\cup T_2)$ we define groups $\Lambda_k$ and $\Lambda_k'$ as in \eqref{groups lambda j and j'}. Consider $\Gamma_t$ and $\Gamma_t'$ defined as in \eqref{gamma_t}. In this case, we obtain
\begin{align}
\begin{split}\label{po modyfikacji}
{\rm rk}_{\Z}\,\Gamma_t - {\rm rk}_{\Z}\,\Gamma_t\cap \Gamma_t'&=t-|T_2|,\\
{\rm rk}_{\Z}\,\Gamma_t' - {\rm rk}_{\Z}\,\Gamma_t\cap \Gamma_t'&=t-|T_1|.
\end{split}\end{align}
\end{remark}

 Let $\Sigma_t$ be defined as in \eqref{sigma_t} and let $\Gamma_t$, $\Gamma_t'$ be defined as in \eqref{gamma_t} (cf. Remark \ref{rem about tail}).
We can generalize Theorem \ref{wild 1-motives for tori}.
Namely, in the following Theorem~\ref{const2}, applying \eqref{formula for rank of tail} or \eqref{po modyfikacji}, we will obtain a family of wild 1-motives $[\Gamma_t\rightarrow \mathcal{T}]$, $[\Gamma_t'\rightarrow \mathcal{T}]$, $[\Gamma_t\cap\Gamma_t'\rightarrow \mathcal{T}]$ and $[\Gamma_t\cdot\Gamma_t'\rightarrow \mathcal{T}]$ in the sense of P. Deligne, where ${\rm rk}_{\Z}\,\Gamma_t - {\rm rk}_{\Z}\,\Gamma_t\cap \Gamma_t'$ and ${\rm rk}_{\Z}\,\Gamma_t' - {\rm rk}_{\Z}\,\Gamma_t\cap \Gamma_t'$ can be arbitrary large but after changing the base to ${\rm spec}\,\mathbb{F}_p$ and after reduction for all $p\notin\Sigma_t$ we obtain the same torsion 1-motives.
%\end{remark}

%{\color{gray} Observe that we can build groups $\Lambda_j$ and $\Lambda_j'$ from \eqref{groups lambda j and j'} for some $j$ as in \cite[pp.~419-420]{Sch}, i.e., 
%\begin{align*}
%\Lambda_j&=\lambda_j^{\Z} \cdot \lambda_{1j}^{\Z} \cdot \lambda_{2j}^{\Z} \cdot \lambda_{3j}^{\Z}\cdot\ldots\cdot\lambda_{n_jj}^{\Z}\cdot \lambda_{(n_j+1)j}^{\Z}\\
%\Lambda_j'&=\lambda_{1j}^{\Z} \cdot \lambda_{2j}^{\Z} \cdot \lambda_{3j}^{\Z}\cdot\ldots\cdot\lambda_{n_jj}^{\Z}\cdot \lambda_{(n_j+1)j}^{\Z}
%\end{align*}
 %for $j\in T_1\subset\{1,2,\ldots,t\}$ and 
%\begin{align*}
%\Lambda_j&=\lambda_{1j}^{\Z} \cdot \lambda_{2j}^{\Z} \cdot \lambda_{3j}^{\Z}\cdot\ldots\cdot\lambda_{n_jj}^{\Z}\cdot \lambda_{(n_j+1)j}^{\Z}\\
%\Lambda_j'&=\lambda_{1j}^{\Z} \cdot \lambda_{2j}^{\Z} \cdot \lambda_{3j}^{\Z}\cdot\ldots\cdot\lambda_{n_jj}^{\Z}\cdot \lambda_{(n_j+1)j}^{\Z}\cdot\lambda_j'^{\Z}
%\end{align*}
%for $j\in T_2\subset\{1,2,\ldots,t\}$, where $T_1\cap T_2=\emptyset$. For $j\in \{1,2,\ldots,t\}\setminus (T_1\cup T_2)$ we define groups $\Lambda_j$ and $\Lambda_j'$ as in \eqref{groups lambda j and j'}. In this case, we obtain
%\begin{align*}
%{\rm rk}_{\Z}\,\Gamma_t - {\rm rk}_{\Z}\,\Gamma_t\cap \Gamma_t'&=t-|T_2|\\
%{\rm rk}_{\Z}\,\Gamma_t' - {\rm rk}_{\Z}\,\Gamma_t\cap \Gamma_t'&=t-|T_1|.
%\end{align*}} 
\begin{theorem}\label{const2}
%{\color{gray} Let $t\in \N$. Let $\Sigma_t$ be defined as in \eqref{sigma_t} and $\Gamma_t$, $\Gamma_t'$ as in \eqref{gamma_t} or in Remark \ref{rem about tail}.} 
Let $\mathcal{T}=\mathbb{G}_m^2=\mathbb{G}_m\times_{{\rm spec }\,\mathbb{Z}_{\Sigma_t}}\mathbb{G}_m$. Consider the following 1-motives over $\rm{spec}\, \mathbb{Z}_{\Sigma_t}$: $[\Gamma_t\rightarrow \mathcal{T}]$, $[\Gamma_t'\rightarrow \mathcal{T}]$, $[\Gamma_t\cap\Gamma_t'\rightarrow \mathcal{T}]$ and $[\Gamma_t\cdot\Gamma_t'\rightarrow \mathcal{T}]$ in the sense of P.~Deligne. Then after changing the base to ${\spec}\,\mathbb{F}_p$ (for $p\notin \Sigma_t$) and taking the images of the subgroups $\Gamma_t,\;\Gamma_t',\;\Gamma_t\cap\Gamma_t',\;\Gamma_t\cdot\Gamma_t'$ in $T_{p}(\mathbb{F}_p)$ via $r_p$, the torsion 1-motives  $[r_p(\Gamma_t)\rightarrow T_p]$, $[r_p(\Gamma_t')\rightarrow T_p]$, $[r_p(\Gamma_t\cap\Gamma_t')\rightarrow T_p]$ and $[r_p(\Gamma_t\cdot\Gamma_t')\rightarrow T_p]$ are all equal, for each $p\notin \Sigma_t$.
\end{theorem}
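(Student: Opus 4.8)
The plan is to reduce Theorem~\ref{const2} to the case of a single set $S_j$, which is exactly Theorem~\ref{thm} together with Corollary~\ref{maincor}. The underlying torus $\mathcal{T}=\mathbb{G}_m^2$ is the same in all four $1$-motives, so once we know the four lattices reduce to the same subgroup of $T_p(\mathbb{F}_p)$ for every $p\notin\Sigma_t$, the four torsion $1$-motives $[r_p(\Gamma_t)\to T_p]$, $[r_p(\Gamma_t')\to T_p]$, $[r_p(\Gamma_t\cap\Gamma_t')\to T_p]$ and $[r_p(\Gamma_t\cdot\Gamma_t')\to T_p]$ coincide by definition. Thus everything comes down to the group-theoretic equality
\[
r_p(\Gamma_t)=r_p(\Gamma_t')=r_p(\Gamma_t\cap\Gamma_t')=r_p(\Gamma_t\cdot\Gamma_t'),
\]
which is precisely Corollary~\ref{cor3.1}; but since the point of the present theorem is to assemble the pieces carefully (and to accommodate the modification of Remark~\ref{rem about tail}), I would spell out the reduction rather than merely cite it.

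First I would fix $p\notin\Sigma_t$. Since $\Sigma_t=\bigsqcup_{j=1}^t S_j$, we have $p\notin S_j$ for each $j$, so Theorem~\ref{thm} applies to every index $j$: it gives $r_p(\lambda_j)\in r_p(\Lambda_j\cap\Lambda_j')$ and $r_p(\lambda_j')\in r_p(\Lambda_j\cap\Lambda_j')$, whence by Corollary~\ref{maincor},
\[
r_p(\Lambda_j)=r_p(\Lambda_j')=r_p(\Lambda_j\cap\Lambda_j')=r_p(\Lambda_j\cdot\Lambda_j')\qquad\text{for all }j=1,\ldots,t.
\]
In the modified situation of Remark~\ref{rem about tail}, where for $j\in T_1$ the extra generator $\lambda_j'$ is dropped from $\Lambda_j'$ and for $i\in T_2$ the generator $\lambda_i$ is dropped from $\Lambda_i$, the same chain of equalities still holds: the proof of Theorem~\ref{thm} shows $r_p(\lambda_j)\in r_p(\Lambda_j\cap\Lambda_j')$, and this containment is exactly what is needed to collapse $r_p(\Lambda_j)$, $r_p(\Lambda_j')$, $r_p(\Lambda_j\cap\Lambda_j')$ and $r_p(\Lambda_j\cdot\Lambda_j')$ to one subgroup of $T_p(\mathbb{F}_p)$ regardless of which of $\lambda_j,\lambda_j'$ is present.

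Next I would globalize. Because $\Gamma_t=\Lambda_1\cdots\Lambda_t$ and $\Gamma_t'=\Lambda_1'\cdots\Lambda_t'$ as subgroups of $\Z_{\Sigma_t}^\times\times\Z_{\Sigma_t}^\times$, and $r_p$ is a group homomorphism, we get $r_p(\Gamma_t)=r_p(\Lambda_1)\cdots r_p(\Lambda_t)$ and likewise for $\Gamma_t'$, $\Gamma_t\cdot\Gamma_t'=\Lambda_1\Lambda_1'\cdots\Lambda_t\Lambda_t'$, and $\Gamma_t\cap\Gamma_t'=(\Lambda_1\cap\Lambda_1')\cdots(\Lambda_t\cap\Lambda_t')$ as recorded just before Corollary~\ref{cor3.1}. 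Substituting the per-index equalities above, each of the four products becomes $r_p(\Lambda_1\cap\Lambda_1')\cdots r_p(\Lambda_t\cap\Lambda_t')$, so all four agree. The one technical point worth a sentence of care is the identity $r_p(\Gamma_t\cap\Gamma_t')=\prod_j r_p(\Lambda_j\cap\Lambda_j')$: the inclusion of the product in $r_p(\Gamma_t\cap\Gamma_t')$ is clear, and the reverse follows from the displayed decomposition $\Gamma_t\cap\Gamma_t'=\prod_j(\Lambda_j\cap\Lambda_j')$, which itself uses that the $S_j$ are pairwise disjoint so that the lattices $\Lambda_j,\Lambda_j'$ live in independent coordinate directions. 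Finally, having $r_p(\Gamma_t)=r_p(\Gamma_t')=r_p(\Gamma_t\cap\Gamma_t')=r_p(\Gamma_t\cdot\Gamma_t')$ for every $p\notin\Sigma_t$ yields, after base change to $\spec\mathbb{F}_p$, the equality of the four torsion $1$-motives, since they share the same $T_p$ and now the same lattice image. The main obstacle, such as it is, is bookkeeping: making sure the disjointness of the $S_j$ is invoked exactly where the decompositions of $\Gamma_t\cap\Gamma_t'$ and $\Gamma_t\cdot\Gamma_t'$ into factors indexed by $j$ are used, and checking that the argument is insensitive to the Remark~\ref{rem about tail} modification; there is no new analytic or arithmetic input beyond Theorem~\ref{thm}.
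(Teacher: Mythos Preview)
Your proposal is correct and follows the same route as the paper: the paper's proof is the one-line citation ``It follows by Corollary~\ref{cor3.1}'', and your write-up simply unpacks that corollary (and its own one-line proof via Corollary~\ref{maincor}) by applying Theorem~\ref{thm} to each $S_j$ and then multiplying the per-index equalities together. There is no methodological difference; you have just made explicit the bookkeeping about disjointness of the $S_j$ and the factorization $\Gamma_t\cap\Gamma_t'=\prod_j(\Lambda_j\cap\Lambda_j')$ that the paper leaves implicit.
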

\begin{proof}
It follows by Corollary \ref{cor3.1}.
\end{proof}
\begin{remark}
Observe that we can consider our constructions also for abelian varieties like in subsection \ref{subsec1.2}. But there is a bound for the rank of the Mordell-Weil group of $E_d$.
\end{remark}
\begin{remark} Observe that every semiabelian variety $G$ over $F$, with the toral dimension $\dim T=r\geq 2$:
\[
0\rightarrow T\xrightarrow{i} G\xrightarrow{\pi} A\rightarrow 0,
\]
potentially admits families of wild 1-motives. Indeed, let $L/F$ be such that $T\otimes_F L=\mathbb{G}_{m}^r$. Consider a model:
\[
0\rightarrow \mathbb{G}_m^r\xrightarrow{i} \mathcal{G}\xrightarrow{\pi} \mathcal{A}\rightarrow 0
\]
of $G$ over $\mathcal{O}_{L,S}$ for a set $S$ of prime ideals in $\mathcal{O}_L$ such that $\mathcal{G}$ has good reduction for every $v\notin S$. Then we construct families of wild 1-motives $\{[\Lambda_\alpha\rightarrow \mathcal{G}]\}_\alpha$ applying Theorems \ref{wild 1-motives for tori} or \ref{const2} and the map $i:\mathbb{G}_m^r\rightarrow \mathcal{G}$. 
\end{remark}
\section{Appendix on multiple base discrete logarithm problem}\label{appendix}
One of the most important problems in cryptography is the discrete logarithm problem (see \cite[Chapters 5, 6]{CP}, \cite[Chapter 7]{SP}) concerning solutions of the 
equation $a^x = b$ with $a, b \in \mathbb{F}_p^{\,\times}.$ Naturally discrete logarithm problem can be extended
to any abelian group $G.$ It concerns solutions of $g_{1}^{x} = g$ in $x \in \Z$, where $g_{1} ,g \in G$.
We can also consider natural generalization of discrete logarithm problem to multiple base $(\alpha_1,\ldots,\alpha_n)$. A multiple base $(g_1,\ldots,g_n)$ discrete logarithm problem in an abelian group $G$ concerns solutions of $g_1^{x_1}\cdot\ldots\cdot g_n^{x_n}=g$ in $(x_1,\ldots,x_n)\in\Z^n$, where $g_1,\ldots,g_n,g\in G$. 

In relation to the discrete logarithm problem in $\mathbb{F}_p^{\,\times}$ there is an important local to global 
discrete logarithm problem. It asks whether equation $\alpha^{x} = \beta$ in $\mathcal{O}_{F,S}^{\times}$ has solution in $x \in \Z$  if and only if equation $r_{v}(\alpha)^{x_{v}} = r_{v} (\beta)$ in $k_{v}^{\times}$ has solution in $x_{v} \in \Z$ for almost all prime ideals $v \in \mathcal{O}_{F, S},$ where
$r_{v} : \mathcal{O}_{F,S}^{\times} \rightarrow k_{v}^{\times}$ is the reduction map at $v.$ This problem was solved affirmatively by A.~Schinzel in 1975 \cite[Theorem 2, p. 398]{Sch}. 

One can consider natural generalization of the local to global discrete logarithm problem to multiple base
$(\alpha_1,\ldots,\alpha_n)$. However, here situation is not as simple as for the single base local to global discrete logarithm problem. The constructions of families of wild 1-motives which were discussed in subsection \ref{subsec1.2} give counterexample to multiple base discrete logarithm problem. Namely, for instance, A.~Schinzel \cite[pp. 419-420]{Sch} gave counterexample to multiple base discrete logarithm problem in $\mathbb{G}_m^{\;2}(\Z[\frac{1}{2},\frac{1}{3}]).$  In \cite[pp. 330-332]{BK}, the Schinzel's counterexample \cite[pp. 419-420]{Sch} was extended to abelian surfaces $E_d^{\;2}$, where $E_d$ is the CM elliptic curve given by the equation $y^2=x^3-d^2x$ for $d\in\Z$. In \cite[pp. 149-152]{BB}, the Schinzel's counterexample was generalized to 
$\mathbb{G}_m^{\;2}(\Z[\frac{1}{2},\frac{1}{3}, \frac{1}{5}])$, etc.

Based on section \ref{new constructions}, we show that the local to global multiple base $(\alpha_1,\ldots,\alpha_n)$ discrete logarithm problem in $\mathbb{G}_m^{\;2}(\mathcal{O}_{F,S}) = (\mathcal{O}_{F,S}^\times)^{2}$ has counterexamples for arbitrary big bases, including infinite bases (Theorem \ref{thm} cf. Remark \ref{ranks of lambdas}, Corollaries \ref{cor3.1}, \ref{cor3.3}). For this reason multiple base discrete logarithm problem is much more resistant to breaking encryption algorithms.

%{\color{gray}Moreover, our construction produces explicit examples of families of lattices $\Lambda$, $\Lambda'$ which are not commensurable, however $r_v(\Lambda) = r_v (\Lambda') = r_v (\Lambda\cap\Lambda') = r_v (\Lambda\cdot \Lambda')$ for all $v \notin S$.
%
%As a consequence of the results in section \ref{section 2} we construct in section \ref{sec: 1-motives} a family of wild 1-motives of Deligne of the form
%$[\Lambda\xrightarrow{j}G]$ with $G=T=\mathbb{G}_m^2.$ These motives have the same reductions to torsion 1-motives for almost all prime ideals $v.$ Similarly we construct a family of wild 1-motives of Deligne of the form
%$[\Lambda\xrightarrow{j}G],$ with $G=A=E_d^2,$ having the same reductions to torsion 1-motives for almost all prime ideals $v.$}
\subsection{The local to global multiple base $(\alpha_1,\ldots,\alpha_n)$ discrete logarithm problem}
 With notation from section \ref{new constructions}, let us describe consequences of results of that section.
\begin{remark}
For any set of different prime numbers $S=\{p_1, p_2, \ldots, p_n\}$ we can find such natural numbers $N, N'$ that systems of congruences \eqref{ncongruencesmodp}, \eqref{ncongruences2'} will have solutions also for $p\in S$. Indeed, let $N=N':=\varphi(p_1 p_2\ldots p_n)$, where $\varphi(\cdot)$ is the Euler function. Then we obtain the following solutions:
\begin{center}
\begin{tabular}{|c|c|}
\hline
&solutions of \eqref{ncongruencesmodp} modulo $p_i$\\
&$(R_1,\,R_2,\,R_3,\ldots,\,R_n, \,R_{n+1})$\\
\hline
$i=1$& $(0,\,\varphi(p_1),\,0,\ldots,\,0,\,0)$\\
\hline
$i\ne 1$& $(0,\,0,\,0,\ldots,\,0,\,0)$\\
\hline
\end{tabular}\\[0.3cm]
\begin{tabular}{|c|c|}
\hline
&solutions of \eqref{ncongruences2'} modulo $p_i$\\
& $(R_1,\,R_2,\,R_3,\ldots,\,R_n, \,R_{n+1})$\\
\hline
$i=n$& $(0,\,\varphi(p_n),\,0,\ldots,\,0,\,0)$\\
\hline
$i\ne n$& $(0,\,0,\,0,\ldots,\,0,\,0)$\\
\hline
\end{tabular}\\[0.3cm]
\end{center}
\label{rem2.4}\end{remark}
\begin{corollary} Let $N=N'=\varphi(p_1 p_2\ldots p_n)$. 
By Theorem \ref{thm} and Remark \ref{rem2.4}, multiple base $(\lambda_1,\ldots,\lambda_{n+1})$ discrete logarithm of the elements $\lambda$ and $\lambda'$ has solutions modulo $p$, for all prime numbers $p$. However, multiple base $(\lambda_1,\ldots,\lambda_{n+1})$ discrete logarithm of $\lambda$ and $\lambda'$ does not have solutions. 
\end{corollary}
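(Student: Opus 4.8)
The plan is to read the statement as a repackaging of three earlier results — Theorem~\ref{thm}, Remark~\ref{rem2.4} and Remark~\ref{ranks of lambdas} — together with one genuinely new (but elementary) observation, namely that the discrete logarithm equations in question have no integer solution. I would organize the argument into two halves, matching the two sentences of the statement.

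For the local half, I first note that ``the multiple base $(\overline{\lambda_1},\dots,\overline{\lambda_{n+1}})$ discrete logarithm of $\overline{\lambda}$ in $\F_p^\times\times\F_p^\times$ has a solution'' means precisely that $\overline{\lambda}\in\langle\overline{\lambda_1},\dots,\overline{\lambda_{n+1}}\rangle = r_p(\Lambda\cap\Lambda')$, i.e.\ that \eqref{rem 1 for 2 primes} is solvable; similarly for $\overline{\lambda'}$ and \eqref{rem 2 for 2 primes}. For $p\notin S$ this is exactly the content of Theorem~\ref{thm}. For $p=p_i\in S$, taking $N=N'=\varphi(p_1\cdots p_n)$ as prescribed, Remark~\ref{rem2.4} exhibits explicit tuples $(R_1,\dots,R_{n+1})$ solving the congruence systems \eqref{ncongruencesmodp} and \eqref{ncongruences2'} modulo $p_i$, which is the same thing. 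Combining the two cases yields solvability modulo every prime $p$.

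For the global half, a solution of the multiple base $(\lambda_1,\dots,\lambda_{n+1})$ discrete logarithm of $\lambda$ is by definition a tuple $(R_1,\dots,R_{n+1})\in\Z^{n+1}$ with $\lambda=\lambda_1^{R_1}\cdots\lambda_{n+1}^{R_{n+1}}$, i.e.\ $\lambda\in\lambda_1^{\Z}\cdots\lambda_{n+1}^{\Z}=\Lambda\cap\Lambda'$; likewise a solution for $\lambda'$ would force $\lambda'\in\Lambda\cap\Lambda'$. So it suffices to invoke Remark~\ref{ranks of lambdas}: the elements $\lambda,\lambda_1,\dots,\lambda_{n+1},\lambda'$ are $\Z$-linearly independent in $\Z_S^\times\times\Z_S^\times$, hence neither $\lambda$ nor $\lambda'$ lies in $\Lambda\cap\Lambda'$, and both discrete logarithms are globally unsolvable. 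If a self-contained verification is wanted in place of the citation, comparing the two coordinates of the putative identity and using unique factorization in $\Z_S^\times=\{\pm1\}\times p_1^{\Z}\times\cdots\times p_n^{\Z}$ forces first $R_1=\cdots=R_n=0$ and then $R_2=\cdots=R_{n+1}=0$ together with $N=0$, contradicting $N=\varphi(p_1\cdots p_n)\ge1$; the computation for $\lambda'$ is symmetric.

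I do not anticipate a substantive obstacle: everything reduces to results already in hand. The two points needing care are (i) making the dictionary between ``multiple base discrete logarithm is solvable modulo $p$'' and the explicit congruence systems \eqref{ncongruencesmodp}, \eqref{ncongruences2'} precise, so that Theorem~\ref{thm} and Remark~\ref{rem2.4} can be quoted verbatim, and (ii) checking that the chosen $N=N'=\varphi(p_1\cdots p_n)$ satisfies $N\ge1$ (immediate, and in fact $N\ge8$ once $n\ge3$), since it is precisely this that places $\lambda,\lambda'$ outside $\Lambda\cap\Lambda'$ and hence makes the global discrete logarithm unsolvable.
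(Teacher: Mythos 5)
Your proposal is correct and matches the paper's (implicit) argument exactly: the local solvability is exactly Theorem~\ref{thm} for $p\notin S$ together with the explicit solutions of Remark~\ref{rem2.4} for $p\in S$, and the global unsolvability is the $\Z$-linear independence of $\lambda,\lambda_1,\ldots,\lambda_{n+1},\lambda'$ recorded in Remark~\ref{ranks of lambdas}, which places $\lambda,\lambda'$ outside $\Lambda\cap\Lambda'$. Your optional self-contained independence check via unique factorization in $\Z_S^\times$ is also valid.
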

\begin{corollary}
Let $N_j=N_j'=\varphi(p_{1j}p_{2j}\ldots p_{n_{j}j})$ for each $j=1,\ldots, t$. Consider sequence $b_t$ which is the concatenation of the sequences $(\lambda_{1j},\ldots,\lambda_{(n_{j}+1)j})$ for $j=1,\ldots, t$. By Theorem \ref{thm} and Remark \ref{rem2.4}, multiple base $b_t$ discrete logarithm of each $\lambda_j$ and $\lambda_j'$, $j=1,\ldots t$, has solutions modulo $p$, for every $p$. However, multiple base $b_t$ discrete logarithm of each $\lambda_j$ and $\lambda_j'$, $j=1,\ldots, t$, does not have solutions. 
\end{corollary}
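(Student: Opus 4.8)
The plan is to split the statement into its two halves and deal with them in turn. First I would record the identification
$\langle b_t\rangle=\prod_{j=1}^t\langle\lambda_{1j},\ldots,\lambda_{(n_j+1)j}\rangle=\prod_{j=1}^t(\Lambda_j\cap\Lambda_j')=\Gamma_t\cap\Gamma_t'$,
which holds because $\Lambda_j\cap\Lambda_j'=\lambda_{1j}^{\Z}\cdots\lambda_{(n_j+1)j}^{\Z}$ and the $S_j$ are disjoint. Hence ``the multiple base $b_t$ discrete logarithm of $g$ has a (global) solution'' is literally ``$g\in\Gamma_t\cap\Gamma_t'$'', and the two assertions to establish become: (i) for every prime $p$ and every $j$, the reductions of $\lambda_j$ and of $\lambda_j'$ lie in the subgroup of $(\Z/p)^\times\times(\Z/p)^\times$ generated by the reductions of the elements of $b_t$; and (ii) $\lambda_j\notin\Gamma_t\cap\Gamma_t'$ and $\lambda_j'\notin\Gamma_t\cap\Gamma_t'$ for each $j$.

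For (i) I would fix a prime $p$. If $p\notin\Sigma_t$ the claim is immediate from Corollary \ref{cor3.1}: each $\lambda_j$ lies in $\Lambda_j\subseteq\Gamma_t$ and each $\lambda_j'$ in $\Lambda_j'\subseteq\Gamma_t'$, so $r_p(\lambda_j),r_p(\lambda_j')\in r_p(\Gamma_t)=r_p(\Gamma_t')=r_p(\Gamma_t\cap\Gamma_t')=\langle r_p(b_t)\rangle$. If $p\in\Sigma_t$, say $p\in S_{j_0}$, then for every block $j\neq j_0$ the disjointness of the $S_j$ gives $p\notin S_j$, so Theorem \ref{thm}, applied to the single-block data $\Lambda_j,\Lambda_j'$ with the parameters $N_j,N_j'$, yields $r_p(\lambda_j),r_p(\lambda_j')\in r_p(\Lambda_j\cap\Lambda_j')$, i.e. words in the block-$j$ elements of $b_t$; for the remaining block $j_0$ the prescribed value $N_{j_0}=N_{j_0}'=\varphi(p_{1j_0}\cdots p_{n_{j_0}j_0})$ is exactly the hypothesis of Remark \ref{rem2.4}, which makes the congruence systems \eqref{ncongruencesmodp} and \eqref{ncongruences2'} solvable modulo $p$ (as congruences among integer coordinates, in the convention of that remark), hence, via \eqref{rem 1 for 2 primes} and \eqref{rem 2 for 2 primes}, expresses $\lambda_{j_0}$ and $\lambda_{j_0}'$ as words in the block-$j_0$ elements of $b_t$, and in fact with nonnegative exponents so that no inversion modulo $p$ is needed. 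Concatenating these block-wise words gives the required solution of the multiple base $b_t$ discrete logarithm of each $\lambda_j$ and each $\lambda_j'$ modulo $p$, for every $p$.

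For (ii) I would argue by $\Z$-linear independence in $\mathbb{G}_m^{\;2}(\Z_{\Sigma_t})$. Writing $\Z_{\Sigma_t}^\times\cong\{\pm1\}\oplus\bigoplus_{p\in\Sigma_t}\Z$, the disjointness of the $S_j$ makes the free part of $\mathbb{G}_m^{\;2}(\Z_{\Sigma_t})$ a direct sum, over $j$, of the rank-$2n_j$ lattices $\bigoplus_{p\in S_j}\Z^2$; all of $\lambda_j,\lambda_{1j},\ldots,\lambda_{(n_j+1)j},\lambda_j'$ have trivial sign and live in the $j$-th summand. Inside that summand these $n_j+3$ elements are $\Z$-linearly independent, which is precisely the statement ${\rm rk}_{\Z}\,\Lambda_j\cdot\Lambda_j'=n_j+3$ recorded in Remark \ref{ranks of lambdas} (and which uses $n_j>2$). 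Consequently the whole family $\{\lambda_j,\lambda_{1j},\ldots,\lambda_{(n_j+1)j},\lambda_j':1\le j\le t\}$ is $\Z$-linearly independent, so none of $\lambda_j,\lambda_j'$ can lie in the subgroup $\Gamma_t\cap\Gamma_t'$ generated by the $\lambda_{ij}$'s alone; that is, the multiple base $b_t$ discrete logarithm of $\lambda_j$, and of $\lambda_j'$, has no solution.

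The only point that needs real attention is the case $p\in\Sigma_t$ in part (i): one must observe that a prime belonging to $\Sigma_t$ is ``generic'' for every block other than the one that contains it, so Theorem \ref{thm} still applies to those blocks, while the special values $N_j=N_j'=\varphi(\prod_i p_{ij})$ were chosen exactly so that Remark \ref{rem2.4} disposes of the containing block. Everything else is bookkeeping: the identity $\langle b_t\rangle=\Gamma_t\cap\Gamma_t'$ and the cross-block linear independence both follow immediately from the disjointness of the $S_j$ together with the rank computations of Remark \ref{ranks of lambdas}.
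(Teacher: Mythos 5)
Your proof is correct and follows the same route the paper intends: Theorem \ref{thm} for primes outside the relevant block $S_j$, Remark \ref{rem2.4} (with the special choice $N_j=N_j'=\varphi(p_{1j}\cdots p_{n_jj})$, read as integer congruences with nonnegative exponents) for primes inside it, and the $\Z$-linear independence from Remark \ref{ranks of lambdas} together with disjointness of the $S_j$ for the global non-solvability. The paper supplies no separate proof of this corollary, so your blockwise case analysis simply makes explicit what the inline citation of Theorem \ref{thm} and Remark \ref{rem2.4} leaves to the reader.
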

\subsubsection{Infinite basis} Let $\mathcal{P}:=\{2,3,5, \ldots, p,\ldots\}$ be the set of all prime numbers.
Now, consider infinite sequence of pairwise disjoint sets of prime numbers:
\begin{equation}\label{definition of infinite sequence}
S_1=\{p_{11},\ldots, p_{n_{1}1}\}, \ldots, S_t=\{p_{1t},\ldots, p_{n_{t}t}\},\ldots.
\end{equation}
Define $\Sigma_t$ as in \eqref{sigma_t} and put 
\[
\Sigma_\infty:=\bigcup_{t=1}^\infty\Sigma_t=\bigsqcup_{t=1}^\infty S_t.
\]
Define subgroups $\Gamma_t, \Gamma_t'$ of $\mathbb{G}_m^{\;2}(\Z_{\Sigma_\infty})=\Z_{\Sigma_\infty}^\times\times \Z_{\Sigma_\infty}^\times$ as in \eqref{gamma_t}. Observe that:
\[
\Gamma_1\subset\ldots\subset \Gamma_t\subset\Gamma_{t+1}\subset\ldots,
\]
\[
\Gamma_1'\subset\ldots\subset \Gamma_t'\subset\Gamma_{t+1}'\subset\ldots.
\]
Let
\begin{align*}
\Gamma_\infty&:=\varinjlim_{t}\Gamma_t=\bigcup_{t=1}^\infty \Gamma_t,\\
\Gamma_\infty'&:=\varinjlim_{t}\Gamma_t'=\bigcup_{t=1}^\infty \Gamma_t'.
\end{align*}
Notice that
\[
\Gamma_\infty\cap\Gamma_\infty'=\varinjlim_t \Gamma_t\cap\Gamma_t'=\bigcup_{t=1}^\infty\;\Gamma_t\cap\Gamma_t'
\]
and
\[
\Gamma_\infty\cdot\Gamma_\infty'=\varinjlim_t \Gamma_t\cdot\Gamma_t'=\bigcup_{t=1}^\infty\;\Gamma_t\cdot\Gamma_t'.
\]
Observe that
\[
{\rm rk}_{\Z}\,\Gamma_\infty = {\rm rk}_{\Z}\,\Gamma_\infty' = {\rm rk}_{\Z}\,\Gamma_\infty\cap \Gamma_\infty'  = {\rm rk}_{\Z}\,\Gamma_\infty\cdot \Gamma_\infty' =\infty   
\]
and
\[
{\rm rk}_{\Z}\,\Gamma_\infty/(\Gamma_\infty\cap \Gamma_\infty') = {\rm rk}_{\Z}\,\Gamma_\infty'/(\Gamma_\infty\cap \Gamma_\infty')  =\infty.   
\]
\begin{corollary}\label{cor3.3}
Assume that $\mathcal{P}\setminus \Sigma_\infty\ne \emptyset$. For each prime number $p\notin \Sigma_\infty$, we have the following equality:
\[
r_p(\Gamma_\infty)=r_p(\Gamma_\infty')=r_p(\Gamma_\infty\cap\Gamma_\infty')=r_p(\Gamma_\infty\cdot\Gamma_\infty').
\]
\end{corollary}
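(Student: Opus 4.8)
The plan is to reduce the infinite statement to the finite one in Corollary \ref{cor3.1} by passing to the direct limit. Fix a prime $p \notin \Sigma_\infty$. Since $\Sigma_t \subset \Sigma_\infty$ for every $t$, we have $p \notin \Sigma_t$ for all $t$, so Corollary \ref{cor3.1} applies at each index $t$ and gives
\[
r_p(\Gamma_t) = r_p(\Gamma_t') = r_p(\Gamma_t \cap \Gamma_t') = r_p(\Gamma_t \cdot \Gamma_t').
\]

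Next I would record the elementary fact that the reduction map $r_p$ commutes with the unions in sight: for any ascending chain of subgroups $H_1 \subset H_2 \subset \cdots$ one has $r_p\bigl(\bigcup_t H_t\bigr) = \bigcup_t r_p(H_t)$, since the image of a union of sets is the union of the images. Applying this to the four ascending chains $\{\Gamma_t\}$, $\{\Gamma_t'\}$, $\{\Gamma_t \cap \Gamma_t'\}$ and $\{\Gamma_t \cdot \Gamma_t'\}$, together with the identifications $\Gamma_\infty \cap \Gamma_\infty' = \bigcup_t (\Gamma_t \cap \Gamma_t')$ and $\Gamma_\infty \cdot \Gamma_\infty' = \bigcup_t (\Gamma_t \cdot \Gamma_t')$ already recorded in the text, yields
\[
r_p(\Gamma_\infty) = \bigcup_t r_p(\Gamma_t), \qquad r_p(\Gamma_\infty \cap \Gamma_\infty') = \bigcup_t r_p(\Gamma_t \cap \Gamma_t'),
\]
and the analogous formulas for $\Gamma_\infty'$ and $\Gamma_\infty \cdot \Gamma_\infty'$. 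Taking the union over $t$ of the four coinciding sets from the previous display then produces the desired chain of equalities. For completeness I would spell out the identification $\Gamma_\infty \cap \Gamma_\infty' = \bigcup_t(\Gamma_t \cap \Gamma_t')$: the inclusion $\supseteq$ is clear, and for $\subseteq$ an element of $\Gamma_\infty \cap \Gamma_\infty'$ lies in some $\Gamma_s$ and in some $\Gamma_u'$, hence in $\Gamma_t \cap \Gamma_t'$ with $t = \max(s,u)$ by monotonicity of both chains; the statement for the product is immediate.

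I do not expect a genuine obstacle: all the arithmetic content sits in the finite-level Corollary \ref{cor3.1}, and ultimately in Theorem \ref{thm}, while the passage to the limit only involves bookkeeping of which chains are ascending and the harmless observation that $r_p$, being a map of sets, preserves unions. The one point worth stating explicitly is that the hypothesis $\mathcal{P} \setminus \Sigma_\infty \neq \emptyset$ is precisely what guarantees there exists a prime $p$ to which the argument applies, so the assertion is non-vacuous.
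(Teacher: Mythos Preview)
Your proposal is correct and follows exactly the route the paper takes: the paper's own proof simply says ``It follows by Corollary \ref{cor3.1} and construction of $\Gamma_\infty, \Gamma_\infty'$,'' and you have merely unpacked that sentence by making the direct-limit argument explicit.
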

\begin{proof}
It follows by Corollary \ref{cor3.1} and construction of $\Gamma_\infty, \Gamma_\infty'$.
\end{proof}
\begin{remark}
In Corollaries \ref{maincor}, \ref{cor3.1} and \ref{cor3.3} we had to exclude prime numbers in $S, \Sigma_t$ and $\Sigma_\infty$, respectively, because we consider reduction of groups. In this case we also had to consider the reduction of inverses of elements of these groups.
\end{remark}
\begin{corollary}
Let $N_j=N_j'=\varphi(p_{1j}p_{2j}\ldots p_{n_{j}j})$ for each $j\in\N$. Consider sequence $b_\infty$ which is the concatenation of the sequences $(\lambda_{1j},\ldots,\lambda_{(n_j+1)j})$ for $j\in\N$. By Theorem \ref{thm} and Remark \ref{rem2.4}, multiple base $b_\infty$ discrete logarithm of each $\lambda_j$ and $\lambda_j'$, $j\in\N$, has solutions modulo $p$, for every $p$. However, multiple base $b_\infty$ discrete logarithm of each $\lambda_j$ and $\lambda_j'$, $j\in\N$, does not have solutions. 
\label{rem3.4}\end{corollary}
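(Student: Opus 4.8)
The plan is to prove the two assertions — solvability of the multiple base $b_\infty$ discrete logarithm modulo every prime $p$, and its global unsolvability — separately, both by reducing to ingredients already at hand. For the local statement, fix a prime $p$ and an index $j\in\N$. Since the sets $S_1,S_2,\dots$ are pairwise disjoint, there is at most one index $k_0$ with $p\in S_{k_0}$. If $p\notin S_j$ (this covers every $p\notin\Sigma_\infty$, and also every $p\in\Sigma_\infty$ with $j\ne k_0$), then Theorem \ref{thm}, applied to the set $S_j$ with the chosen exponents $N_j,N_j'$, gives $r_p(\lambda_j),r_p(\lambda_j')\in r_p(\Lambda_j\cap\Lambda_j')$; since $\Lambda_j\cap\Lambda_j'=\lambda_{1j}^{\Z}\cdots\lambda_{(n_j+1)j}^{\Z}$ and these elements occur among the entries of $b_\infty$, setting all remaining exponents to zero produces a finitely supported solution modulo $p$ for $\lambda_j$ and for $\lambda_j'$. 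In the remaining case $p=p_{ij}\in S_j$, the choice $N_j=N_j'=\varphi(p_{1j}\cdots p_{n_jj})$ is precisely what Remark \ref{rem2.4} requires: the congruence systems \eqref{ncongruencesmodp} and \eqref{ncongruences2'} attached to the block $S_j$ are then solvable modulo $p$, which again expresses $\lambda_j$ and $\lambda_j'$ through $\lambda_{1j},\dots,\lambda_{(n_j+1)j}$ modulo $p$; extending by zeros settles this case, so local solvability holds for every $p$.

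For global unsolvability, note that a global solution for $\lambda_j$ is a finitely supported integer family $(R_{ik})$ with $\lambda_j=\prod_k\prod_{i=1}^{n_k+1}\lambda_{ik}^{R_{ik}}$ in $\Z_{\Sigma_\infty}^\times\times\Z_{\Sigma_\infty}^\times$, i.e. $\lambda_j\in\langle b_\infty\rangle$, the subgroup generated by all the $\lambda_{ik}$. From $\Lambda_k\cap\Lambda_k'=\lambda_{1k}^{\Z}\cdots\lambda_{(n_k+1)k}^{\Z}$ and $\Gamma_\infty\cap\Gamma_\infty'=\bigcup_t(\Lambda_1\cap\Lambda_1')\cdots(\Lambda_t\cap\Lambda_t')$ one gets $\langle b_\infty\rangle=\Gamma_\infty\cap\Gamma_\infty'$, so it suffices to show $\lambda_j\notin\Gamma_\infty\cap\Gamma_\infty'$ (and likewise for $\lambda_j'$). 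All the elements $\lambda_k,\lambda_{1k},\dots,\lambda_{(n_k+1)k},\lambda_k'$, $k\in\N$, have positive coordinates, so any relation among them lives in the free abelian group on the primes of $\Sigma_\infty$, taken in each of the two coordinates; within a single block $S_k$ the vectors $\lambda_k,\lambda_{1k},\dots,\lambda_{(n_k+1)k},\lambda_k'$ are $\Z$-linearly independent by Remark \ref{ranks of lambdas}, and for distinct blocks the supporting primes are disjoint, so the whole family over all $k$ is $\Z$-linearly independent. Hence $\lambda_j$ is not a $\Z$-linear combination of the $\lambda_{ik}$ alone, so $\lambda_j\notin\Gamma_\infty\cap\Gamma_\infty'$; the argument for $\lambda_j'$ is identical.

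The statement is in essence an assembly of Theorem \ref{thm}, Remark \ref{rem2.4} and Remark \ref{ranks of lambdas}, so there is no deep obstacle; the points that need care rather than routine verification are, on the local side, making the meaning of ``solution modulo $p$'' unambiguous when $p\in\Sigma_\infty$ (where some reductions $r_p(\lambda_{ik})$ are not units in $\F_p$) and checking that the explicit solutions of Remark \ref{rem2.4} carry the correct zero pattern and non-negative exponents so that they genuinely solve the congruence, and, on the global side, the identification $\langle b_\infty\rangle=\Gamma_\infty\cap\Gamma_\infty'$ together with the linear-independence bookkeeping across infinitely many blocks. Both become transparent once one works inside the free abelian group generated by the primes of $\Sigma_\infty$.
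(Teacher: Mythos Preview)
Your proposal is correct and follows the same route the paper indicates: the paper gives no separate proof for this corollary, embedding the justification directly in the statement via the references to Theorem~\ref{thm} and Remark~\ref{rem2.4}, and your argument is precisely the detailed unpacking of those citations together with the linear-independence input from Remark~\ref{ranks of lambdas}. Your added care about the meaning of ``solution modulo $p$'' when $p\in\Sigma_\infty$ and about the identification $\langle b_\infty\rangle=\Gamma_\infty\cap\Gamma_\infty'$ goes beyond what the paper spells out but is exactly what is needed to make the statement rigorous.
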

\begin{remark}
Observe that Remark \ref{rem3.4} also works in the case where $\Sigma_\infty=\mathcal{P}$. In this case $\Z_{\Sigma_\infty}=\Q$ but the reader observes that it is not an obstruction to run down our computations.
\end{remark}
%-----------------------------------------------------------------------
%{\color{blue}
%
%
%
%
%
%
%}
%
%
%
%
%
%
%
%
%
%-------------------------------------------------------------

\end{document}